\documentclass[a4paper,11pt]{article}
\renewcommand{\o}{\vee}

\newcommand{\y}{\wedge}

\usepackage{amsmath}

\newcommand{\romb}{\diamond}

\newcommand{\0}{\Delta}

\DeclareMathOperator{\CON}{\mathrm{Con}}

\newcommand{\variedad}[1]{\mathcal{#1}}

\newcommand{\V}{\variedad{V}}

\newcommand{\id}{\approx}

\renewcommand{\id}{\approx}
\newcommand{\func}{\rightarrow}

\newcommand{\iso}{\cong}

\usepackage{amsthm}
\usepackage{verbatim}
\usepackage[all]{xy}
\newlength{\ancho}
\setlength{\ancho}{\paperwidth}
\addtolength{\ancho}{-2in}
\textwidth=\ancho
\evensidemargin=0mm
\oddsidemargin=0mm
\newcommand{\impl}{\rightarrow}

\renewcommand{\>}{\rangle}
\newcommand{\ent}{\Rightarrow}
\newcommand{\tne}{\Leftarrow}

\renewcommand{\phi}{\varphi}
\newcommand{\phis}{{\varphi^*}}

\renewcommand{\th}{\theta}

\newcommand{\ths}{{\theta^*}}
\newcommand{\Cg}{\mathrm{Cg}}
\newcommand{\proj}{\mathrm{pj}}
\renewcommand{\romb}{\times}
\newcommand{\vx}{x,y,z,w}
\newcommand{\vc}{a,b,c,d}
\newcommand{\termth}{\sigma(\vec{X})}
\newcommand{\termths}{\sigma^*(\vec{X})}
\newcommand{\termphi}{\rho(\vec{X})}
\newcommand{\termphis}{\rho^*(\vec{X})}
\newcommand{\largo}[1]{|#1|}
\newtheorem{theorem}{Theorem}
\newtheorem{lemma}[theorem]{Lemma}
\newtheorem{prop}{Proposition}
\newtheorem{corollary}[theorem]{Corollary}

\theoremstyle{definition}

\CompileMatrices

\begin{document}
\title{Boolean Factor Congruences and Property (*)}
\author{Pedro S\'{a}nchez Terraf\thanks{Supported by CONICET}}
\date{}
\maketitle
\begin{abstract}
A variety $\V$ has \emph{Boolean factor congruences (BFC)}  if
the set of factor congruences of every algebra in $\V$ is a distributive
sublattice of its congruence lattice; this property holds in rings
with unit and in every variety which has a semilattice
operation.  BFC has a prominent
role in the study of uniqueness of direct product representations of algebras,
since it is a strengthening of the \emph{refinement property}.

We provide an explicit Mal'cev condition for BFC. With the aid of
this condition, it is shown that 
BFC is equivalent to a variant of the  definability \emph{property (*)}, an open
problem in R. Willard's work~\cite{7}. 
\end{abstract}
\section{Introduction}

There is an extensive research concerning
uniqueness of direct product representations (the book of McKenzie,
McNulty and Taylor~\cite{4} is an excellent reference in the
subject). We may start mentioning the classical  theorem of Wedderburn 
and R. Remak, afterwards generalized by Krull and Schmidt, about direct
representations of groups. 

It is convenient to adopt the language of universal algebra at this
point. An \emph{algebra} is a nonempty set together with an arbitrary but
fixed collection of finitary operations. A \emph{variety} is an
equationally-definable class of algebras over the same language. One
fruitful approach to the problem of uniqueness is given by several 
notions of refinement.
We say that an algebra $A$ has the \emph{refinement 
  property} if  
 for every two direct product
decompositions $A\iso  \prod_i B_i  \iso\prod_j  C_j$, there exist $D_{ij}$ such that $B_i \iso \prod_j D_{ij}$ and 
$C_j \iso \prod_i D_{ij}$. 
 In Figure~\ref{fig:diagrama} (a) we have pictured this situation in
 the case $I = J = \{1,2\}$, where every arrow
correspond to a canonical projection onto a direct factor.
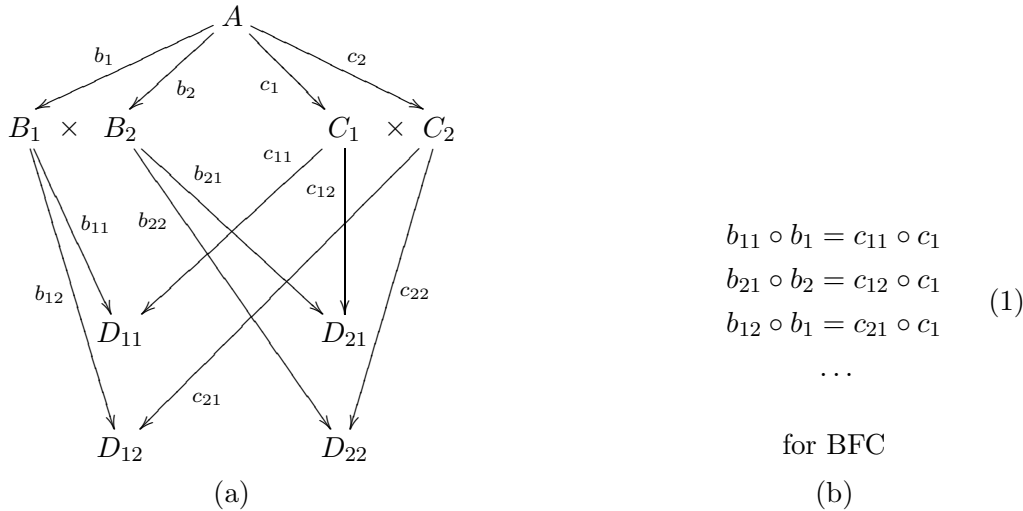
\begin{figure}[h]
\begin{tabular}{ccc}
\raisebox{25ex}[28ex][12ex]{\xymatrix@C=0pt@R=6ex{ & & & A \ar[dlll]!/l 2.5ex/_{b_1} \ar[dl]!/l 1ex/^{b_2}
   \ar[dr]_{c_1} \ar[drrr]!/r 2ex/^{c_2}  \\
  {B_1} \ar[ddrr]^{b_{11}} \ar[dddrr]_{b_{12}} & {\times}  & B_2 \ar[ddrr]^(.3){b_{21}}
  \ar[dddrr]_(.25){b_{22}} &\hspace{5em} & C_1 \ar[ddll]_(.2){c_{11}}
  \ar[dd]_(.3){c_{12}} & \times
  & C_2 \ar[dddllll]^(.8){c_{21}} \ar[dddll]^{c_{22}}  \\
  \\
  & & D_{11} & & D_{21} \\
 & & D_{12} & & D_{22} 
}
}
& \hspace{4em} &
\begin{minipage}{2in}
{\begin{equation}\label{eq:13}
\begin{split}
  b_{11} \circ b_1  &= c_{11} \circ c_1 \\
  b_{21} \circ b_2  &= c_{12} \circ c_1 \\
  b_{12} \circ b_1  &= c_{21} \circ c_1 \\
  &\dots
\end{split}
\end{equation}}

\centerline{for BFC} 
\end{minipage} \\
(a) & & (b)
\end{tabular}
\caption{An instance of refinement and its strict version.\label{fig:diagrama}}
\end{figure}

In \cite{ChaJonTar}, C.~C.~Chang, J\'onsson and Tarski defined Boolean
factor congruences in its full generality and proved it equivalent to a \emph{strict} version of the refinement
property.  A variety $\V$ has \emph{Boolean factor congruences (BFC)}  if
the set of factor congruences of any algebra in $\V$ is a distributive
sublattice of its congruence lattice. Equivalently, if every algebra
in $\V$ satisfy the refinement property with the extra requirement
that the diagram in Figure~\ref{fig:diagrama} is commutative, as
in (b) (see \cite[Theorem 5.6]{ChaJonTar}).

Several years later, D. Bigelow and S. Burris~\cite{1} proved that BFC is a
\emph{Mal'cev property}, and hence  one can assign to every variety
$\V$ with BFC a family
of terms and identities (a \emph{Mal'cev condition}) that ``link''
this property to the syntax of the defining identities of $\V$. 
In our experience,  having an explicit Mal'cev condition may
be very helpful in the search of first-order-logic characterizations
of algebraic concepts. But the result of Bigelow and Burris was based
on Theorem 4.2 of Taylor~\cite{Tay}, which gives a proof using
preservation techniques but does not provide
an explicit Mal'cev condition.

The next step in this direction was taken by Ross
Willard. In his work \cite{7}, he found a very nice definability
property (*) and he proved that it was equivalent to BFC in a broad
class of varieties. A variety $\V$ satisfies \emph{property (*)} if and only if
there exists a factorable\footnote{The definition of \emph{factorable}
  formulas is given in \cite{7}; the main feature of these
  formulas is that they are preserved by taking direct products and factors.} 
first-order  formula  $\pi(x,y,z,w)$ in the  language of $\V$ such that: 
\begin{itemize}
\item $\V\models \pi(x,y,x,y)$
\item $\V\models \pi(x,x,z,w)$
\item $\V\models \pi(x,y,z,z) \impl x=y$
\end{itemize}
That work aimed to obtain a Mal'cev condition for BFC, but only in
2000 Willard found a way to achieve this. He presented his result at
the AMS Spring Southeastern Section Meeting (Columbia, SC). In a
personal communication,
Willard informed D.~Vaggione and the author about this result. He starts
at a property of (not necessarily factor) congruences which is
equivalent to BFC  and then explains a syntactic procedure in order to
produce an explicit Mal'cev condition. However, it appears that a
condition thus generated would be very complicated. 

Here begins the story of this paper. Vaggione and the author were
studying the definability of factor congruences and the
center~\cite{CFC,DFC} and proved that the former implies BFC. 
In the search of an explicit definition, the author pursued the
Mal'cev condition indicated by Willard. From this, a very similar
condition for ``definable factor congruences'' was found. As a
confirmation of our early remark about the role of Mal'cev properties,
we were able to construct a first-order definition $\Phi$ of factor
congruences using  \emph{central elements} (introduced in \cite{va0})
as parameters.

This result was presented in the ``Conference in
Universal Algebra and Lattice Theory'' at Szeged in 2005. During this
conference, Willard asserted that BFC is equivalent property (*) in general,
arguing on the finiteness of the set of terms involved in witnessing
BFC. Soon after that, we realized that a construction
line-by-line analog to that of the formula $\Phi$ 
provides a formula $\pi$ and proves this converse.



In this work we prove:
\begin{theorem}\label{th:principal}
Let $\V$ be a variety. The following are
equivalent:
\begin{enumerate}
\item 
There exists a first-order 
  formula  $\pi(x,y,z,w)$ in the   language of $\V$ which is 
  preserved by direct factors and direct products, and such that: 
  \begin{enumerate}
  \item $\V\models \pi(x,y,x,y)$
  \item $\V\models \pi(x,x,z,w)$
  \item $\V\models \pi(x,y,z,z) \impl x=y$
  \end{enumerate}
\item $\V$ has BFC.
\end{enumerate}
\end{theorem}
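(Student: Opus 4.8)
The plan is to prove the two implications separately. The substantive direction is $(2)\impl(1)$: the main tool there is the theorem of Bigelow and Burris~\cite{1} that BFC is a Mal'cev property, so that, when $\V$ has BFC, this is witnessed throughout $\V$ by finitely many terms, out of which the formula $\pi$ will be built. The reverse implication $(1)\impl(2)$ will be read off from $\pi$ through the strict refinement property and \cite[Theorem~5.6]{ChaJonTar}.

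For $(2)\impl(1)$, starting from the reformulation of BFC as a property of (not necessarily factor) congruences and applying Willard's syntactic procedure --- rather than the non-constructive argument of Taylor~\cite{Tay} on which~\cite{1} rests --- one extracts an \emph{explicit} Mal'cev condition for BFC: finitely many terms and identities with the feature that, evaluated in any $A\in\V$ on a pair $c,d\in A$ together with auxiliary arguments, they produce a complementary factor congruence pair which is least among those identifying $c$ with $d$. One then writes down $\pi(x,y,z,w)$, line by line in analogy with the construction of the formula $\Phi$ for definable factor congruences in~\cite{CFC,DFC}: it asserts, in terms of these Mal'cev terms, that $(x,y)$ lies in the least factor congruence that identifies $z$ with $w$. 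The formula obtained in this way is factorable in the sense of~\cite{7} --- indeed, it can be taken primitive positive --- and is therefore preserved by direct products and by direct factors. Conditions~(a) and~(b) then become instances of reflexivity --- of the generated pair and of its complement, respectively --- translated through the Mal'cev identities, while~(c), the quasi-identity $\pi(x,y,z,z)\impl x\id y$, records that the least factor congruence identifying $z$ with $z$ is $\Delta$.

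For $(1)\impl(2)$, assume $\pi$ is given and show that in every $A\in\V$ the factor congruences permute and form a distributive sublattice of $\CON(A)$, which by \cite[Theorem~5.6]{ChaJonTar} is equivalent to the strict refinement property, hence to BFC. Fix a complementary pair $(\theta,\ths)$ and identify $A$ with $A/\theta\times A/\ths$, calling the two factors $B$ and $C$. Evaluating $\pi$ along this product and using that it is preserved by direct products and factors, one finds, for $c,d$ with $(c,d)\in\theta$, that $\pi^A(a,b,c,d)$ holds exactly when $a$ and $b$ have equal $B$-coordinates and the relation $\pi^C$ holds between their $C$-coordinates and those of $c,d$; combining this with~(b) and~(c) shows that $\{(a,b):\pi^A(a,b,c,d)\}$ is a reflexive relation containing $(c,d)$ and contained in $\theta$, and symmetrically for $\ths$. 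Varying $c,d$ and running this argument simultaneously for two complementary pairs, one reads off that the factor congruences of $A$ permute and that the sublattice of $\CON(A)$ they generate is distributive. (This parallels the argument in~\cite{CFC,DFC} that the existence of $\Phi$ yields BFC, as well as Willard's proof that property~(*) implies BFC in~\cite{7}, of which only the preservation of $\pi$ by direct products and factors is used.)

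The step I expect to be the main obstacle is the construction inside $(2)\impl(1)$. Bigelow--Burris only guarantees that a Mal'cev condition exists, so one must first produce a usable explicit one via Willard's procedure; and then the real work is to fold the finitely many terms it supplies into a single first-order $\pi(x,y,z,w)$ meeting~(a),~(b) \emph{and}~(c) simultaneously --- especially the genuine constraint~(c), where the minimality of the generated factor congruence is what gets used. The transfer from the construction of $\Phi$ is guided by a close analogy, but making sure that each of the three conditions survives the translation, and that --- the language of $\V$ being arbitrary --- no unavailable constants or operations slip in, is where the care is required.
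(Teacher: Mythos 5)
Your high-level architecture coincides with the paper's --- the converse $(1)\Rightarrow(2)$ is genuinely easy and is dispatched by citing Willard, while the substance is an explicit Mal'cev condition for BFC out of whose terms $\pi$ is assembled --- but the two concrete claims you make about the shape of $\pi$ are both false, and the construction you yourself identify as the main obstacle is not carried out. First, $\pi$ cannot be taken primitive positive. A primitive positive formula defines in each algebra a relation that is (a projection of) a subuniverse of a power, hence closed under the basic operations acting coordinatewise. In the two-element meet-semilattice, condition (a) forces the tuple $(1,0,1,0)$ into $\pi$, condition (b) forces $(1,1,0,1)$ in, and their coordinatewise meet is $(1,0,0,0)$, which condition (c) excludes; since semilattices have BFC, no primitive positive witness exists even in that benign case. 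The $\pi=\Phi_1\wedge\Phi_2$ actually constructed has implications in its matrix under alternating quantifiers $\exists x_1\forall y_1\cdots$, and its preservation by products and factors is a separate theorem (imported from \cite[Theorem 22]{DFC}); the paper moreover explicitly leaves open whether formulas with these preservation properties are factorable in the sense of \cite{7}, so your factorability claim is likewise unsupported.

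Second, your semantic description of $\pi$ --- that it asserts that $(x,y)$ lies in the least factor congruence identifying $z$ with $w$ --- is refuted by the paper's own Section~\ref{sec:examples}. Such a least factor congruence need not exist, and even the predicate $\Gamma(x,y,z,w)$ (``$(x,y)$ belongs to every factor congruence containing $(z,w)$'') is only a one-way consequence of $\pi$, not its content: Proposition~\ref{p:Psi}(1) shows that any predicate of that form is compatible with the operations in its first two arguments, whereas the four-element algebra in the counterexample satisfies $\pi(a,b,a,b)$ and $\pi(c,c,a,b)$ but not $\pi(a\cdot c,\,b\cdot c,\,a,b)$. Relatedly, the Mal'cev condition of Theorem~\ref{th:malcev_bfc} does not ``produce a least complementary factor congruence pair''; it is an implication about four arbitrary congruences $\theta,\theta^*,\phi,\phi^*$ concluding $(x,y)\in\phi\vee\delta_N$ (Corollary~\ref{co:Willard_terms}). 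So while your plan points in the right direction, the actual content of the proof --- the recursions defining $\sigma,\sigma^*,\rho,\rho^*$, the terms $L_\alpha,R_\alpha$ indexed by words, and the formulas $\Psi_m$ whose interleaving of quantifiers and implications makes (a), (b) and (c) hold simultaneously --- is missing, and the shortcuts proposed in its place do not work.
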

Strictly speaking, statement (1) in the theorem  is not property (*)
as stated in~\cite{7}. It remains to be checked if every
sentence having these preservation properties is
factorable. In any case, this definition captures the true
essence of BFC, concerning its relation to preservation by taking
direct factors (see~\cite{CFC,DFC}), and we will keep that name. 

The proof of this theorem will be an application of the results in
\cite{DFC}. In order to do this we will have to restate several
results in that work for the case of BFC. We will do this in
Section~\ref{sec:malcev-condition-bfc}, where the Mal'cev condition
for BFC is obtained. The terms
of this condition are the building blocks for our definition of $\pi$,
carried out in Section~\ref{sec:willards_pi}. Finally, we consider
some (counter)examples in Section~\ref{sec:examples}.

Throughout this paper, the following notation will be used. For $A\in
\mathcal{V}$ and 
$\vec{a},\vec{b}\in A^{n}$,  
$\Cg  
^{A}(\vec{a},\vec{b})$ will denote the congruence generated by the set $%
\{(a_k,b_k):1\leq k\leq n\}$. The symbols
$\nabla$ and $\Delta$ will stand for the universal and trivial
congruence, respectively. We will use  $\th \romb \ths = \0$ in
place of  ``$\th$ and $\ths$ are complementary factor
congruences''. The term algebra (in the language of $\V$) and the
$\V$-free algebra on $X$ will be 
denoted by $T(X)$ and $F(X)$, respectively.
The $i$-th component of an element $a$ in a direct product $\Pi_i A_i$
will be called $a^i = \proj_i(a)$; hence, if $a\in 
A_0\times A_1$,  $a = \<a^0,a^1\>$. If elements $a,b$ of an algebra
$A$ are related by a congruence $\th\in\CON(A)$, we will write
interchangeably $(a,b)\in \th$, $a \,\th\, b$ or $a
\stackrel{\th}{\equiv} b$. This notation  generalizes
to tuples, viz.,  $\vec a
\,\th\, \vec b$ means $(a_i,b_i)\in \th$ for all $i$. 

\section{A Mal'cev Condition for BFC}\label{sec:malcev-condition-bfc}
In this section we will rewrite several combinatorial lemmas
from~\cite{DFC} for the case of BFC. In the first place, we need new
definitions of our former functions $\sigma$, $\sigma^*$, $\rho$ and
$\rho^*$.

Let  $s_i,t_i $ be $(2i+2)$-ary terms (in the language of $\V$) for each
$i=1,\dots,n$ and let $A\in\V$.  Let
$(a,b,c,d,a_1,b_1,\dots,a_n,b_n) \in A^{4+2n}$;
we  define  $\sigma(a,b,c,d,a_1,b_1,\dots,a_n,b_n)$ to be the tuple
$(x,y,z,w,x_1,y_1,\dots,x_n,y_n)$ given  by the following recursion:
\begin{align*}
  x&:= a &  w&:=b \\
  y&:= b   &   x_j&:=s_j(x,y,z,w,x_1,y_1,\dots,x_{j-1},y_{j-1}) \\
  z&:= a & y_j &:= b_j 
\end{align*}
We define $\sigma^*$, $\rho$, $\rho^*$ analogously.
\begin{itemize}
\item  $\sigma^*(a,b,c,d,a_1,b_1,\dots,a_n,b_n) =
  (x,y,z,w,x_1,y_1,\dots,x_n,y_n)$ where:
  \begin{align*}
    x&:= a  & w &:= d \\
    y&:= a  & x_j&:=t_j(x,y,z,w,x_1,y_1,\dots,x_{j-1},y_{j-1}) \\
    z&:= c    & y_j &:= b_j 
  \end{align*}
\item  $\rho(a,b,c,d,a_1,b_1,\dots,a_n,b_n) =
  (x,y,z,w,x_1,y_1,\dots,x_n,y_n)$ where:
\begin{align*}
  x&:= a  & w &:= c \\
  y&:= b   & x_j &:= a_j \\
  z&:= c   & y_j&:=s_j(x,y,z,w,x_1,y_1,\dots,x_{j-1},y_{j-1}) 
\end{align*}
\item  $\rho^*(a,b,c,d,a_1,b_1,\dots,a_n,b_n) =
  (x,y,z,w,x_1,y_1,\dots,x_n,y_n)$ where:
  \begin{align*}
    x&:= a  & w &:= d \\
    y&:= b   & x_j &:= a_j \\
    z&:= c  & y_j&:=t_j(x,y,z,w,x_1,y_1,\dots,x_{j-1},y_{j-1}) 
  \end{align*}
\end{itemize}

In the following we restate the first lemmas in \cite{DFC} for
these new functions:
\begin{lemma}\label{l:igualdad_congr}
For every $(a,b,c,d,a_1,b_1,\dots,a_n,b_n) \in A^{4+2n}$, we have the following
identities:
\begin{multline*}
 \Cg(a,c) \o \Cg(b,d) \o \bigvee_i
\Cg(a_i,s_i(a,b,c,d,a_1,b_1,\dots,a_{i-1},b_{i-1}))= \\
 = \Cg((a,b,c,d,f,a_1,b_1,\dots,a_n,b_n),\sigma(a,b,c,d,f,a_1,b_1,\dots,a_n,b_n))
\end{multline*}
\begin{multline*}
\Cg(a,b) \o \bigvee_i \Cg(a_i,t_i(a,b,c,d, a_1,
b_1,\dots,a_{i-1},b_{i-1})) = \\ =
\Cg((a,b,c,d, a_1, b_1,\dots,a_n,b_n),\sigma^*(a,b,c,d, a_1, b_1,\dots,a_n,b_n))
\end{multline*}
\begin{multline*}
\Cg(c,d) \o \bigvee_i \Cg(b_i,s_i(a,b,c,d, a_1,
b_1,\dots,a_{i-1},b_{i-1})) = \\ = \Cg((a,b,c,d,\dots,a_n,b_n),\rho(a,b,c,d, a_1, b_1,\dots,a_n,b_n))
\end{multline*}
\begin{multline*}
 \bigvee_i  \Cg(b_i,t_i(a,b,c,d, a_1, b_1,\dots,a_{i-1},b_{i-1})) =
 \\=   \Cg((a,b,c,d, a_1, b_1,\dots,a_n,b_n),\rho^*(a,b,c,d, a_1, b_1,\dots,a_n,b_n))
\end{multline*}
\end{lemma}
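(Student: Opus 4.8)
The plan is to prove each of the four congruence identities by a direct "Mal'cev-generator" computation, noting that the four cases are entirely parallel, so it suffices to explain the first in detail and indicate the obvious modifications for the others. Fix the tuple $(a,b,c,d,a_1,b_1,\dots,a_n,b_n)\in A^{4+2n}$ and abbreviate $\bar a=(a,b,c,d,a_1,b_1,\dots,a_n,b_n)$, and write $\sigma(\bar a)=(x,y,z,w,x_1,y_1,\dots,x_n,y_n)$ with the components given by the recursion defining $\sigma$. Recall that for any algebra $A$ and any $\bar u,\bar v\in A^m$ one has $\Cg^{A}(\bar u,\bar v)=\bigvee_{k}\Cg^{A}(u_k,v_k)$; this reduces the right-hand side of each identity to a join of principal congruences on coordinates, which we then match against the left-hand side.

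For the first identity: by the coordinatewise description, the right-hand side equals
\begin{equation*}
\Cg(a,a)\o\Cg(b,b)\o\Cg(c,a)\o\Cg(d,b)\o\bigvee_{j}\Cg\bigl(a_j,x_j\bigr)\o\bigvee_{j}\Cg(b_j,b_j),
\end{equation*}
where I am comparing $\bar a$ componentwise with $\sigma(\bar a)$: the first four coordinates of $\sigma(\bar a)$ are $a,b,a,b$, the coordinate $x_j$ is $s_j(x,y,z,w,x_1,y_1,\dots,x_{j-1},y_{j-1})$, and the coordinate $y_j$ is $b_j$. The trivial congruences $\Cg(a,a)$, $\Cg(b,b)$, $\Cg(b_j,b_j)$ drop out, leaving $\Cg(a,c)\o\Cg(b,d)\o\bigvee_j\Cg(a_j,x_j)$. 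So the heart of the matter is to show that in this join we may replace each $x_j=s_j(x,y,z,w,x_1,\dots,x_{j-1},y_{j-1})$ by $s_j(a,b,c,d,a_1,b_1,\dots,a_{j-1},b_{j-1})$, i.e. that the congruence on the left is unchanged when the arguments of the $s_j$ are shifted from the $\sigma$-components to the original components. This is proved by induction on $j$: modulo $\th:=\Cg(a,c)\o\Cg(b,d)\o\bigvee_{i<j}\Cg(a_i,s_i(a,b,c,d,a_1,\dots,b_{i-1}))$ we have $x\equiv a$, $z\equiv a\equiv c$, $w\equiv b\equiv d$, $y\equiv b$ (here one uses $x=a$, $z=a$, $w=b$, $y=b$ from the recursion, together with $a\,\th\,c$ and $b\,\th\,d$), and by the induction hypothesis $x_i\equiv s_i(a,b,c,d,a_1,\dots,b_{i-1})\equiv a_i$ and $y_i=b_i$ for $i<j$; since congruences are compatible with the term operation $s_j$, it follows that $x_j=s_j(x,y,z,w,x_1,y_1,\dots)\equiv s_j(a,b,c,d,a_1,b_1,\dots,a_{j-1},b_{j-1})\pmod{\th}$, whence $\Cg(a_j,x_j)\o\th=\Cg(a_j,s_j(a,b,c,d,\dots))\o\th$. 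Summing over $j$ gives both inclusions between the two joins, i.e. the claimed equality.

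The remaining three identities are handled the same way. For $\sigma^*$ one compares $\bar a$ with $\sigma^*(\bar a)=(a,a,c,d,x_1,y_1,\dots)$ where $x_j=t_j(\dots)$, $y_j=b_j$; the nontrivial surviving generators are $\Cg(a,b)$ (from the second coordinate) and $\bigvee_j\Cg(a_j,x_j)$, and modulo $\Cg(a,b)$ together with the lower-index terms one has $x\equiv a$, $y=a\equiv b$, $z=c$, $w=d$, so $t_j$'s arguments may be replaced by $a,b,c,d,a_1,b_1,\dots$ as before. For $\rho$ one compares with $\rho(\bar a)=(a,b,c,c,a_1,y_1,\dots)$, $y_j=s_j(\dots)$, so the surviving generators are $\Cg(c,d)$ (fourth coordinate, since $w=c$) and $\bigvee_j\Cg(b_j,y_j)$, and modulo $\Cg(c,d)$ and the lower terms one has $x=a$, $y=b$, $z=c$, $w=c\equiv d$, $y_i\equiv s_i(a,b,c,d,\dots)\equiv b_i$ so again the substitution is legitimate. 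For $\rho^*$, comparing with $\rho^*(\bar a)=(a,b,c,d,a_1,y_1,\dots)$, $y_j=t_j(\dots)$: the first four coordinates already agree, so the only generators are $\bigvee_j\Cg(b_j,y_j)$, and modulo the lower terms one has $x=a,y=b,z=c,w=d$ literally (no extra $\Cg$ needed) and $y_i\equiv b_i$, so $t_j$'s arguments may be replaced, giving $\bigvee_j\Cg(b_j,t_j(a,b,c,d,a_1,b_1,\dots))$ as required.

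I do not anticipate a genuine obstacle here: the content is purely bookkeeping, and the only point requiring care is the simultaneous induction on the index $j$ in which one must keep track of exactly which of the four "boundary" equalities $x\equiv a$, $y\equiv b$, $z\equiv c$, $w\equiv d$ are available and which come for free from the literal definitions in the recursion versus which require one of the generators $\Cg(a,c)$, $\Cg(b,d)$, $\Cg(a,b)$, $\Cg(c,d)$ on the left-hand side. A secondary, purely cosmetic, issue is the stray variable $f$ appearing in the displayed first identity of the lemma statement; I would treat it as a typo for an absent coordinate (or simply an extra coordinate contributing the trivial $\Cg(f,f)$) and proceed with the $4+2n$-tuples as defined just above the lemma.
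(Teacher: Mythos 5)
Your proof is correct: the reduction of $\Cg(\vec a,\sigma(\vec a))$ to a join of coordinatewise principal congruences, followed by the induction on $j$ that lets you trade the $\sigma$-components inside $s_j$ (resp.\ $t_j$) for the original components modulo the previously accumulated generators, is exactly the routine computation this lemma calls for; the paper itself omits the proof, merely noting that these are restatements of the corresponding lemmas of \cite{DFC}, and your argument is the expected adaptation. You are also right that the $f$ in the first displayed identity is a stray typo.
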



\begin{corollary}\label{l:recursion_aes}
Given $a,b,c,d\in A$ and $\th,\ths\in\CON(A)$
such that $c \,\th\, a \,\ths\, b \,\th\, d$
and for every $a_i$ and $b_i$ with $i=1,\dots,n$ such that
 \begin{equation}
\begin{split}
s_1(a,b,c,d)\stackrel{\th}{\equiv} &\  a_1 \stackrel{\ths}{\equiv}
t_1(a,b,c,d)\\ 
s_2(a,b,c,d,a_1,b_1)\stackrel{\th}{\equiv} &\  a_2 \stackrel{\ths}{\equiv}
 t_2(a,b,c,d,a_1,b_1)\\ 
& \dots \\
s_{j+1}(a,b,c,d,a_1,b_1,\dots,a_{j},b_{j})\stackrel{\th}{\equiv} & \
a_{j+1}
\stackrel{\;\ths}{\equiv}t_{j+1}(a,b,c,d,a_1,b_1,\dots,a_{j},b_{j}) 
\end{split}
\end{equation}
we have
\begin{equation}\label{eq:4}
t(\sigma(a,b,c,d,a_1,b_1,\dots,a_n,b_n)) 
\stackrel{\th}{\equiv}
t(a,b,c,d,a_1,b_1,\dots,a_n,b_n)  \stackrel{\ths}{\equiv}
t(\sigma^*(a,b,c,d,a_1,b_1,\dots,a_n,b_n)) 
\end{equation}
for every $(2n+4)$-ary term $t$ in the language of $\V$.
\end{corollary}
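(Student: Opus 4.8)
The plan is to prove Corollary~\ref{l:recursion_aes} as a direct consequence of Lemma~\ref{l:igualdad_congr}, exploiting the fact that a congruence identity of the form $\Cg(\vec u,\vec v)=\Cg(\vec u',\vec v')$ transfers automatically to any congruence $\th$ that collapses one of the generating pairs: if $\Cg(\vec u,\vec v)\subseteq\th$ then every pair in $\Cg(\vec u',\vec v')$ lies in $\th$ as well, and in particular $(t(\vec u'),t(\vec v'))\in\th$ for every term $t$ (since congruences are closed under term operations). So the strategy has two halves, one for $\th$ and $\sigma$, and a symmetric one for $\ths$ and $\sigma^*$; I will only write the first in detail and remark that the second is analogous.

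First I would verify the hypothesis that makes the $\th$-half work, namely that
\[
  \Cg(a,c)\o\Cg(b,d)\o\bigvee_i\Cg\bigl(a_i,s_i(a,b,c,d,a_1,b_1,\dots,a_{i-1},b_{i-1})\bigr)\subseteq\th .
\]
This is immediate from the standing assumptions: $c\,\th\,a$ gives $\Cg(a,c)\subseteq\th$; $b\,\th\,d$ gives $\Cg(b,d)\subseteq\th$; and the recursive hypotheses $s_{i}(a,b,c,d,a_1,\dots)\stackrel{\th}{\equiv}a_i$ give $\Cg(a_i,s_i(\dots))\subseteq\th$ for each $i=1,\dots,n$. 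Hence the whole join is below $\th$. By the first identity of Lemma~\ref{l:igualdad_congr} (modulo the harmless extra pair involving $f$, which we may take equal to $b$, so it contributes nothing and the identity reduces to the stated congruence), this join equals
\[
  \Cg\bigl((a,b,c,d,a_1,\dots,b_n),\sigma(a,b,c,d,a_1,\dots,b_n)\bigr),
\]
so this principal congruence on tuples is also $\subseteq\th$; equivalently $(a,b,c,d,a_1,\dots,b_n)\stackrel{\th}{\equiv}\sigma(a,b,c,d,a_1,\dots,b_n)$ componentwise. Applying the $(2n+4)$-ary term operation $t$ to both tuples and using that $\th$ is a congruence yields $t(a,b,c,d,a_1,\dots,b_n)\stackrel{\th}{\equiv}t(\sigma(a,b,c,d,a_1,\dots,b_n))$, which is the left-hand $\th$-congruence in \eqref{eq:4}.

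For the right-hand half I would run the symmetric argument with $\ths$ in place of $\th$ and the second identity of Lemma~\ref{l:igualdad_congr} in place of the first: the assumptions $a\,\ths\,b$ and $t_i(a,b,c,d,a_1,\dots)\stackrel{\ths}{\equiv}a_i$ show that $\Cg(a,b)\o\bigvee_i\Cg(a_i,t_i(\dots))\subseteq\ths$, which by that identity equals $\Cg\bigl((a,b,c,d,a_1,\dots,b_n),\sigma^*(a,b,c,d,a_1,\dots,b_n)\bigr)$, so the two tuples are $\ths$-related componentwise, and applying $t$ gives $t(a,b,c,d,a_1,\dots,b_n)\stackrel{\ths}{\equiv}t(\sigma^*(a,b,c,d,a_1,\dots,b_n))$. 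Chaining the two halves at the common middle term $t(a,b,c,d,a_1,\dots,b_n)$ produces exactly \eqref{eq:4}.

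I do not expect a serious obstacle here; the only point that needs a word of care is the bookkeeping discrepancy between the statement of Lemma~\ref{l:igualdad_congr}, whose first identity carries an auxiliary coordinate $f$, and the corollary, which has none. The clean way to handle this is to instantiate $f:=b$ (so that the coordinate $f$ is $\th$-related to $w:=b$ trivially and adds nothing to either side of the identity), or simply to note that the lemma as applied with $n$ variables $a_i,b_i$ specializes to what we need. Beyond that, everything is the standard fact that inclusions between principal congruences are preserved under application of term operations, so the proof is little more than reading off Lemma~\ref{l:igualdad_congr} twice.
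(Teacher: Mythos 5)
Your proof is correct and is exactly the intended derivation: the paper states this as an immediate corollary of Lemma~\ref{l:igualdad_congr}, obtained by noting that the hypotheses place the generating pairs of the relevant join below $\th$ (resp.\ $\ths$), so the principal congruence $\Cg(\vec X,\sigma(\vec X))$ (resp.\ $\Cg(\vec X,\sigma^*(\vec X))$) lies below it, and term operations preserve congruences. You also correctly identify the stray $f$ in the first identity of the lemma as a typographical leftover that does not affect the argument.
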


\begin{corollary}\label{l:recursion_bes}
Suppose $a,b,c,d\in A$, $\phi,\phis\in\CON(A)$
such that $c \,\phi\, d$. If $a_i$ and $b_i$ satisfy
\begin{equation}
\begin{split}
s_1(a,b,c,d)\stackrel{\phi}{\equiv} &\  b_1 \stackrel{\phis}{\equiv}
t_1(a,b,c,d)\\ 
& \dots \\
s_{j+1}(a,b,c,d,a_1,b_1,\dots,a_{j},b_{j})\stackrel{\phi}{\equiv} & \ b_{j+1}
 \stackrel{\;\phis}{\equiv}t_{j+1}(a,b,c,d,a_1,b_1,\dots,a_{j},b_{j}) 
\end{split}
\end{equation}
we obtain
\begin{equation}\label{eq:5}
 t(\rho(a,b,c,d,a_1,b_1,\dots,a_n,b_n)) \stackrel{\phi}{\equiv}
 t(a,b,c,d,a_1,b_1,\dots,a_n,b_n)  \stackrel{\phis}{\equiv}
 t(\rho^*(a,b,c,d,a_1,b_1,\dots,a_n,b_n))
\end{equation}
for every $(2n+4)$-ary term $t$ in the language of $\V$.
\end{corollary}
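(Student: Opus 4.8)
The plan is to deduce both congruences from the single fact that every congruence of $A$ is compatible with all the basic operations, hence with every term. Write $\vec u:=(a,b,c,d,a_1,b_1,\dots,a_n,b_n)$. Since $t$ is a $(2n+4)$-ary term, to obtain $t(\rho(a,b,c,d,a_1,b_1,\dots,a_n,b_n))\stackrel{\phi}{\equiv} t(a,b,c,d,a_1,b_1,\dots,a_n,b_n)$ it suffices to check that the tuples $\rho(\vec u)$ and $\vec u$ are $\phi$-related coordinate by coordinate; likewise $t(\vec u)\stackrel{\phis}{\equiv} t(\rho^*(\vec u))$ will follow once we know that $\vec u$ and $\rho^*(\vec u)$ are $\phis$-related coordinatewise. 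So the whole argument reduces to these two coordinatewise claims, after which one simply applies the interpretation of $t$.

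First I would handle the $\phis$-part, which is the easier one. By the definition of $\rho^*$, its first four coordinates are exactly $a,b,c,d$ and its ``$x_j$'' coordinates are exactly the $a_j$, so only the ``$y_j$'' coordinates, namely $y_j^{\rho^*}:=t_j(a,b,c,d,a_1,y_1^{\rho^*},\dots,a_{j-1},y_{j-1}^{\rho^*})$, need to be compared with the $b_j$. I would prove $y_j^{\rho^*}\stackrel{\phis}{\equiv} b_j$ by induction on $j$: the base case $j=1$ is precisely the first line of the hypothesis (using symmetry of $\phis$), and for the inductive step one replaces each $y_i^{\rho^*}$ inside $t_{j+1}$ by $b_i$ modulo $\phis$ (inductive hypothesis plus compatibility of $t_{j+1}$) to reach $t_{j+1}(a,b,c,d,a_1,b_1,\dots,a_j,b_j)$, which is $\phis$-congruent to $b_{j+1}$ by hypothesis. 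Note that here the standing assumption $c\,\phi\,d$ is not even used, since the first four coordinates of $\rho^*(\vec u)$ agree with those of $\vec u$ on the nose.

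The $\phi$-part runs along the same induction, but now the fourth coordinate of $\rho(\vec u)$ is $c$, not $d$, so every $s_j$ occurring in the recursion that defines $\rho$ is evaluated with $c$ in the slot where the hypotheses use $d$; this is exactly where the assumption $c\,\phi\,d$ is spent. Concretely, setting $y_j^{\rho}:=s_j(a,b,c,c,a_1,y_1^{\rho},\dots,a_{j-1},y_{j-1}^{\rho})$, I would show $y_j^{\rho}\stackrel{\phi}{\equiv} b_j$ by induction on $j$. For $j=1$, compatibility of $s_1$ together with $c\,\phi\,d$ gives $s_1(a,b,c,c)\stackrel{\phi}{\equiv} s_1(a,b,c,d)$, and the latter is $\phi$-congruent to $b_1$ by hypothesis. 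For the step, compatibility of $s_{j+1}$, the relation $c\,\phi\,d$, and the inductive hypothesis yield $y_{j+1}^{\rho}\stackrel{\phi}{\equiv} s_{j+1}(a,b,c,d,a_1,b_1,\dots,a_j,b_j)$, which is $\phi$-congruent to $b_{j+1}$ by hypothesis.

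I do not expect a real obstacle here: this is a bookkeeping lemma of exactly the same shape as Corollary~\ref{l:recursion_aes}. The only point that needs attention is that the recursion defining $\rho$ (and $\rho^*$) feeds the \emph{already perturbed} values $y_1^{\rho},\dots,y_j^{\rho}$, not the original $b_i$, into $s_{j+1}$; thus one must propagate the congruence through the recursion rather than comparing term values against the hypotheses one at a time. Once the two coordinatewise claims are in place, applying $t$ to the respective tuples finishes the proof.
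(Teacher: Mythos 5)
Your proof is correct and is essentially the intended argument: the paper omits the proof of this corollary (deferring to the analogous lemmas in \cite{DFC}), and the standard route is exactly your coordinatewise induction showing $\rho(\vec u)\,\phi\,\vec u$ and $\rho^*(\vec u)\,\phis\,\vec u$, followed by compatibility of the term $t$. Your observations that $c\,\phi\,d$ is consumed only in the $\rho$-side (because $\rho$ sets $w:=c$) and that the recursion feeds the perturbed values $y_i^{\rho}$ into $s_{j+1}$ are precisely the two points that need care, and you handle both.
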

We will also need the following (Gr\"{a}tzer's)
version of  Mal'cev's key observation on principal congruences.

\begin{lemma}\label{malsev}Let $A$ be any algebra and let $a,b\in A,$
 $\vec{a},\vec{b}\in
A^{n}.$ Then $(a,b)\in \Cg ^{A}(\vec{a},\vec{b})$ if and only if 
there
exist $(n+m)$-ary terms 
$p_{1}(\vec{x},\vec{u}),\dots,p_{k}(\vec{x},\vec{u})$,
with $k$ odd and, $\vec{u}\in A^{m}$ such that: 
\begin{equation*}
\begin{split}
a& =p_{1}(\vec{a},\vec{u}) \\
p_{i}(\vec{b},\vec{u})& =p_{i+1}(\vec{b},\vec{u}),\ i\text{
odd} \\
p_{i}(\vec{a},\vec{u})& =p_{i+1}(\vec{a},\vec{u}),\ i\text{
even} \\
p_{k}(\vec{b},\vec{u})& =b
\end{split}
\end{equation*}
The formula $\xi(x,y,\vec x,\vec y, \vec u)$ given by 
\[x = p_1(\vec x, \vec u) \ \y 
\bigwedge_{i \text{ odd}} p_{i}(\vec{y},\vec{u})
=p_{i+1}(\vec{y},\vec{u}) \ \y \;
\bigwedge_{i \text{ even}} p_{i}(\vec{x},\vec{u})
=p_{i+1}(\vec{x},\vec{u}) \ \y \ 
 p_k(\vec y, \vec u) = y\]
is called a \emph{principal congruence formula\footnotemark}.  
\end{lemma}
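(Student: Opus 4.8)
The plan is to prove the two implications separately, following the classical argument of Mal'cev (in Gr\"atzer's formulation): the ``only if'' direction is a one-line consequence of the congruence axioms, while the ``if'' direction requires identifying $\Cg^A(\vec a,\vec b)$ with the relation generated by all such ``Mal'cev chains''.

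For the direction from the existence of the $p_i$'s, put $\theta := \Cg^A(\vec a,\vec b)$. Since $(a_j,b_j)\in\theta$ for all $j$ we have $\vec a \mathrel{\theta} \vec b$ coordinatewise, so for each fixed $\vec u$ the $n$-ary polynomial $\vec x \mapsto p_i^A(\vec x,\vec u)$ yields $p_i(\vec a,\vec u) \mathrel{\theta} p_i(\vec b,\vec u)$. Concatenating these with the stated equalities --- which glue $p_i$ and $p_{i+1}$ at $\vec b$ when $i$ is odd and at $\vec a$ when $i$ is even --- produces the chain $a = p_1(\vec a,\vec u) \mathrel{\theta} p_1(\vec b,\vec u) = p_2(\vec b,\vec u) \mathrel{\theta} \cdots \mathrel{\theta} p_k(\vec b,\vec u) = b$, whence $(a,b)\in\theta$.

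For the converse, let $R\subseteq A\times A$ consist of all pairs $(s,t)$ admitting terms $p_1,\dots,p_k$ ($k$ odd) and parameters $\vec u$ satisfying the four displayed conditions with $s,t$ replacing $a,b$. The paragraph above (applied with $s,t$ in place of $a,b$) gives $R\subseteq \Cg^A(\vec a,\vec b)$, and taking $k=1$ with $p_1(\vec x,\vec u):=x_j$ shows $(a_j,b_j)\in R$ for every $j$. Hence it suffices to prove that $R$ is a congruence of $A$: then $\Cg^A(\vec a,\vec b)\subseteq R$, so $R = \Cg^A(\vec a,\vec b)$ and the stated equivalence follows. Reflexivity is witnessed by the projection onto an extra variable adjoined to $\vec u$; compatibility reduces, by changing one coordinate at a time and invoking transitivity (established below), to the single-argument case, which is handled by replacing each witnessing $p_i$ with $f(\dots,p_i,\dots)$ and absorbing the remaining arguments of $f$ into $\vec u$.

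The delicate point --- the step I expect to cost the most care --- is checking that $R$ is symmetric and transitive, because the normalization built into the definition (chain length odd, starting at $\vec a$, ending at $\vec b$, odd/even indices glued at $\vec b$/$\vec a$) is destroyed by naively reversing or concatenating chains. The uniform remedy is the insertion of \emph{constant} terms: a term of the form $p(\vec x,\vec u,z):=z$, obtained by adjoining one extra parameter. For symmetry one reverses the list $p_1,\dots,p_k$, prepends a constant term of value $t$ and appends a constant term of value $s$; a direct verification shows the reversed gluing identities now hold and the two added terms restore the parity and the $\vec a$-to-$\vec b$ normalization. For transitivity one concatenates a chain for $(s,t)$ with a chain for $(t,t')$ --- first padding both parameter tuples so that each term ignores the other chain's parameters --- and inserts a single constant term of value $t$ at the junction to repair parity, after which one checks that the total length remains odd. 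With $R$ reflexive, symmetric, transitive and compatible, the argument is complete.
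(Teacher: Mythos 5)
Your proof is correct: the verification that the set $R$ of Mal'cev-chain-related pairs is a congruence (with constant terms, i.e.\ extra parameters, used to repair parity under reversal and concatenation) is exactly the classical Mal'cev--Gr\"atzer argument. The paper itself states this lemma without proof, citing it as a known observation, so there is nothing to compare against beyond noting that your argument is the standard one it implicitly relies on.
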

\footnotetext{It is customary to call ``principal
  congruence formula'' the existential formula  $\exists\vec u\;
  \xi(x,y,\vec x,\vec y, \vec u)$, but we took this license here for
  technical reasons (see the comments after Corollary~\ref{p:def_infinitaria}).}
\begin{corollary}\label{coromalsev}
For every  homomorphism $F:A\rightarrow B$, if $(a,b)\in
\Cg^{A}(\vec{a},\vec{b})$, then $(F(a),F(b))\in \Cg^{B}(F(\vec{a}),F(\vec{b})).$
\end{corollary}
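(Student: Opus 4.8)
The statement to prove is Corollary~\ref{coromalsev}, which is an immediate consequence of Lemma~\ref{malsev}.

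The plan is straightforward: apply the Mal'cev characterization of principal congruences (Lemma~\ref{malsev}) to $(a,b)\in\Cg^A(\vec a,\vec b)$ to extract the witnessing terms $p_1,\dots,p_k$ and the parameters $\vec u\in A^m$, then push everything through the homomorphism $F$ and verify that $F(\vec a), F(\vec b), F(\vec u)$ together with the same terms $p_1,\dots,p_k$ witness $(F(a),F(b))\in\Cg^B(F(\vec a),F(\vec b))$.

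\begin{proof}
By Lemma~\ref{malsev}, since $(a,b)\in\Cg^A(\vec a,\vec b)$ there exist $(n+m)$-ary terms $p_1(\vec x,\vec u),\dots,p_k(\vec x,\vec u)$ with $k$ odd and a tuple $\vec u\in A^m$ satisfying the chain of equalities displayed in that lemma. Apply $F$ to each of those equalities. Since $F$ is a homomorphism, it commutes with term operations, i.e.\ $F\bigl(p_i(\vec c,\vec u)\bigr)=p_i^B\bigl(F(\vec c),F(\vec u)\bigr)$ for any tuple $\vec c\in A^n$ and each $i$. Hence, writing $\vec u' := F(\vec u)\in B^m$, we obtain
\begin{equation*}
\begin{split}
F(a) &= p_1\bigl(F(\vec a),\vec u'\bigr) \\
p_i\bigl(F(\vec b),\vec u'\bigr) &= p_{i+1}\bigl(F(\vec b),\vec u'\bigr),\ i\text{ odd} \\
p_i\bigl(F(\vec a),\vec u'\bigr) &= p_{i+1}\bigl(F(\vec a),\vec u'\bigr),\ i\text{ even} \\
p_k\bigl(F(\vec b),\vec u'\bigr) &= F(b).
\end{split}
\end{equation*}
This is exactly the system of Lemma~\ref{malsev} for the pair $(F(a),F(b))$ relative to the tuples $F(\vec a),F(\vec b)$, witnessed by the same terms $p_1,\dots,p_k$ and the parameter $\vec u'$. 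Applying the ``if'' direction of Lemma~\ref{malsev} in $B$, we conclude $(F(a),F(b))\in\Cg^B\bigl(F(\vec a),F(\vec b)\bigr)$, as desired.
\end{proof}

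There is essentially no obstacle here; the only point requiring care is that the \emph{same} terms $p_i$ serve as witnesses in both $A$ and $B$, which is automatic because term operations are interpreted uniformly across all algebras in the language. One could alternatively prove this directly without invoking Lemma~\ref{malsev}, by noting that $F^{-1}\bigl(\Cg^B(F(\vec a),F(\vec b))\bigr)$ is a congruence of $A$ containing all pairs $(a_k,b_k)$, hence containing $\Cg^A(\vec a,\vec b)$; but since Lemma~\ref{malsev} is already available, the argument above is the most economical.
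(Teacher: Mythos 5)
Your proof is correct and follows exactly the route the paper intends: the corollary is stated immediately after Lemma~\ref{malsev} with no written proof, precisely because the argument is the standard one you give — extract the Mal'cev witnesses, apply the homomorphism (which commutes with term operations), and reuse the same terms in $B$. Your closing remark about the alternative argument via $F^{-1}\bigl(\Cg^B(F(\vec a),F(\vec b))\bigr)$ being a congruence containing the generating pairs is also correct, but the main argument is the one matching the paper's framing.
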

The following theorem gives a Mal'cev condition for BFC. 
We will use $\largo{\alpha}$ to denote the length of a word $\alpha$
and $\varepsilon$ will denote the empty word.
\begin{theorem}\label{th:malcev_bfc}
A variety  $\V$ has BFC if and only if there exist 
integers 
$N=2k$  and $n$, $(2i+2)$-ary terms $s_i$ and $t_i$ for each
$i=1,\dots,n$, and for every word  $\alpha$ in the alphabet $\{1,\dots,N\}$ of
length no greater than $N$ there are terms $L_\alpha, R_\alpha $ such that 
\medskip

\noindent\fbox{$\largo{\alpha}=N$}
\begin{equation}
\begin{split}\label{eq:dfc_N}
L_\alpha(\termphi) & \id R_\alpha(\termphi) \\
L_\alpha(\termphis) & \id R_\alpha(\termphis) \end{split}
\end{equation}
\fbox{$\largo{\alpha}=0$}
\begin{align}
\begin{split}\label{eq:1}
x  & \id L_\varepsilon(\vec X)\\
y & \id R_\varepsilon(\vec X) 
\end{split} \\
L_\varepsilon(\termphi) & \id L_{1}(\termphi)\label{eq:dfc_0_phi_1}\\
R_{j}(\termphi) & \id L_{j+1}(\termphi) \qquad \text{ if
$1\leq j \leq N-1$} \label{eq:dfc_0_phi}\\
R_{ N}(\termphi) & \id R_{\varepsilon}(\termphi) \label{eq:dfc_0_phi_N}
\end{align}
\fbox{$0<\largo{\alpha}<N$}
\smallskip

 If $\largo{\alpha}$ is even then
\begin{align}
L_\alpha(\termphi) & \id L_{\alpha 1}(\termphi)\label{eq:dfc_par_phi_1} \\
R_{\alpha j}(\termphi) & \id L_{\alpha (j+1)}(\termphi)\label{eq:dfc_par_phi} \qquad
\text{ if $1\leq j \leq k-1$} \\
R_{\alpha k}(\termphi) & \id R_{\alpha}(\termphi)\label{eq:dfc_par_phi_k}\\
\begin{split}\label{eq:dfc_par_phi*}
L_\alpha(\termphis) & \id L_{\alpha (k+1)}(\termphis) \\
R_{\alpha j}(\termphis) & \id L_{\alpha (j+1)}(\termphis) \qquad
\text{ if $k+1\leq j \leq N-1$} \\
R_{\alpha N}(\termphis) & \id R_{\alpha}(\termphis)
\end{split}
\end{align}

 If $\largo{\alpha}$ is odd then
\begin{align}
\begin{split}
L_\alpha(\termth) & \id L_{\alpha1}(\termth)\label{eq:2} \\
R_{\alpha j}(\termth) & \id L_{\alpha (j+1)}(\termth) \qquad
\text{ if $1\leq j \leq k-1$} \\
R_{\alpha k}(\termth) & \id R_{\alpha}(\termth)
\end{split}\\
\begin{split}
L_\alpha(\termths) & \id L_{\alpha (k+1)}(\termths)\label{eq:3} \\
R_{\alpha j}(\termths) & \id L_{\alpha
(j+1)}(\termths)
\qquad \text{ if $k+1\leq j \leq N-1$} \\
R_{\alpha N}(\termths) & \id R_{\alpha}(\termths)
\end{split}
\end{align}
where $\vec X =
(x,y,z,w,x_1,y_1,\dots, x_n,y_n)$ and  $\sigma$, $\sigma^*$,
$\rho$ and  $\rho^*$ are defined relative to  $s_i,$ $t_i$,
on $T_\V(\vec X)$. 
\end{theorem}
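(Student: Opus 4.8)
The plan is to follow the scheme used in \cite{DFC} for the analogous Mal'cev condition for definable factor congruences, reducing everything to the combinatorial lemmas restated above. There are two implications.

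\emph{Sufficiency.} Suppose $N=2k$, $n$, the terms $s_i,t_i$ and the families $L_\alpha,R_\alpha$ with the displayed identities are given, and let $A\in\V$. By the very definition of BFC we must show that the factor congruences of $A$ form a distributive sublattice of $\CON(A)$; as in \cite{DFC} it suffices to verify a statement of the following shape: given two pairs of complementary factor congruences $\th\romb\ths=\0$ and $\phi\romb\phis=\0$ and elements $a,b,c,d\in A$ related as in Corollaries~\ref{l:recursion_aes} and~\ref{l:recursion_bes} (that is, $c\,\th\,a\,\ths\,b\,\th\,d$ and $c\,\phi\,d$), a prescribed relation between $a$ and $b$ holds. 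Fix such data; since the $s_i$ and $t_i$ are terms we may define $a_i,b_i\in A$ by the common recursion required in Corollaries~\ref{l:recursion_aes} and~\ref{l:recursion_bes} (it closes up, each new value being prescribed from earlier ones), and evaluate $\vec X$ at $(a,b,c,d,a_1,b_1,\dots,a_n,b_n)$. Now run a downward induction on $\largo{\alpha}$. At the leaves $\largo{\alpha}=N$, the boxed identities~\eqref{eq:dfc_N}, fed to Corollaries~\ref{l:recursion_aes} and~\ref{l:recursion_bes} with $t=L_\alpha$ and $t=R_\alpha$, force $L_\alpha(\vec X)=R_\alpha(\vec X)$ after the relevant projections. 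For $0<\largo{\alpha}<N$, and finally for $\largo{\alpha}=0$, the propagation identities together with the inductive hypothesis let one read the sequence $L_{\alpha1},R_{\alpha1},L_{\alpha2},\dots$ as a principal congruence chain à la Lemma~\ref{malsev}, whence $\bigl(L_\alpha(\vec X),R_\alpha(\vec X)\bigr)$ belongs to the appropriate join of one of $\th,\ths,\phi,\phis$ with principal congruences coming from $s_i,t_i$; the parity of $\largo{\alpha}$ decides whether $\sigma,\sigma^*$ (via Corollary~\ref{l:recursion_aes}) or $\rho,\rho^*$ (via Corollary~\ref{l:recursion_bes}) — hence which group of identities — is in play. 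At the root,~\eqref{eq:1} identifies $L_\varepsilon(\vec X)$ with $x=a$ and $R_\varepsilon(\vec X)$ with $y=b$, giving exactly the required relation between $a$ and $b$.

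\emph{Necessity.} Assume $\V$ has BFC. First one extracts $n$ and the terms $s_i,t_i$: the congruence-theoretic condition equivalent to BFC, instantiated in a free algebra $F_\V(x,y,z,w)$ enlarged by finitely many fresh variables, produces a congruence membership; rewriting the relevant joins of principal congruences as single principal congruences $\Cg(\vec X,\sigma(\vec X))$, $\Cg(\vec X,\sigma^*(\vec X))$, $\Cg(\vec X,\rho(\vec X))$, $\Cg(\vec X,\rho^*(\vec X))$ via Lemma~\ref{l:igualdad_congr}, and applying Lemma~\ref{malsev}, one obtains finite Mal'cev chains, and compactness of equational consequence makes all the data finite — reading it off yields $n$ and $s_i,t_i$. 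With $\sigma,\sigma^*,\rho,\rho^*$ now defined on $T_\V(\vec X)$, iterate: each of the four congruences reappearing at a node is again principal of the above shape, so Lemma~\ref{malsev} supplies, level by level, chains whose links must be resolved in turn; the tree of nested chains is indexed by words $\alpha$, one letter per level, the chain at node $\alpha$ has endpoints $L_\alpha,R_\alpha$, and the parity of $\largo{\alpha}$ records whether that level is a $\th/\ths$-step or a $\phi/\phis$-step. Compactness bounds the depth, and the analysis pins the bound down as $\largo{\alpha}\le N=2k$; the defining equations of the chains are precisely the displayed identities. Finally Corollary~\ref{coromalsev} transports these identities from the free algebra to every member of $\V$.

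The step I expect to be the main obstacle is the bookkeeping that glues the two halves together: matching, node by node in the tree of words, the correct group of identities (unstarred versus starred, $\th$-type versus $\phi$-type) with the correct instance of Corollary~\ref{l:recursion_aes} or~\ref{l:recursion_bes}, keeping track of the successive evaluations at $\sigma,\sigma^*,\rho,\rho^*$ as one descends, and checking that the simultaneous recursion defining the $a_i,b_i$ is consistent. In the necessity direction the additional delicate point is carrying out the reduction to a finitely generated free algebra cleanly and extracting the explicit depth bound $N=2k$ rather than merely ``some finite bound''.
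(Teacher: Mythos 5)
Your overall architecture matches the paper's (reduce sufficiency to Willard's congruence implication and run a downward induction on $\largo{\alpha}$; for necessity, work in a free algebra, apply BFC, and extract Mal'cev chains via Lemma~\ref{malsev}), but the necessity direction as you sketch it has a genuine gap: you never explain how BFC gets applied. BFC is a statement about \emph{factor} congruences, whereas the four congruences $\th$, $\ths$, $\phi$, $\phis$ one writes down in the free algebra (joins of principal congruences of the form $\Cg(p,x_{p,q})$, $\Cg(x_{p,q},q)$, etc.) are not complementary factor congruences, so BFC says nothing about them directly and no ``congruence membership'' is produced. The paper's key device — which your proposal omits entirely — is the congruence $\delta_\infty=\bigvee_n\delta_n$ built from the alternating recursion $\delta_{n+1}=(\th\o\epsilon_n)\cap(\ths\o\epsilon_n)$, $\epsilon_{n+1}=(\phi\o\delta_n)\cap(\phis\o\delta_n)$: one checks $(\phi\o\delta_\infty)\cap(\phis\o\delta_\infty)=\delta_\infty$ (and likewise for $\th,\ths$), so that in $F/\delta_\infty$ the images become genuine complementary factor congruences, BFC yields $(x,y)\in\phi\o\delta_\infty$, and a compactness argument replaces this by $(x,y)\in\phi\circ^{2N}\delta_N^N$ — the single relation from which all the nested chains, the tree of words, and the bound $N=2k$ are read off. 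Relatedly, your ``free algebra $F_\V(x,y,z,w)$ enlarged by finitely many fresh variables'' is backwards: one must start with the infinite closure $Z^\infty$ (a pair of fresh variables $x_{p,q},y_{p,q}$ for \emph{every} pair of terms $p,q$), since the finitely many that survive — whose indices are precisely the $s_i,t_i$ — can only be identified after the chains have been extracted.

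In the sufficiency direction your sketch is essentially the paper's, but one load-bearing point is stated with the wrong emphasis: you say $(L_\alpha(\vec X),R_\alpha(\vec X))$ lands in a join involving ``one of'' $\th,\ths,\phi,\phis$. What the induction actually needs is that each such pair lands in \emph{both} $\phi$ and $\phis$ (even $\largo{\alpha}$, via $\rho$ and $\rho^*$) or in both $\th$ and $\ths$ (odd $\largo{\alpha}$, via $\sigma$ and $\sigma^*$), so that the hypotheses $\phi\romb\phis=\0$ and $\th\romb\ths=\0$ force the \emph{equality} $L_\alpha(\vec X)=R_\alpha(\vec X)$ for every $\alpha\neq\varepsilon$. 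Only these equalities collapse the root-level chain given by (\ref{eq:dfc_0_phi_1})--(\ref{eq:dfc_0_phi_N}) into a pure $\phi$-chain yielding $(a,b)\in\phi$, which is the precise conclusion (not an unspecified ``prescribed relation'') that Willard's reduction requires.
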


\begin{proof}
($\tne$) Assume the existence of the terms, and suppose $\phi \romb 
\phis= \0$, $\th \romb \ths = \0$, and 
$a \, \th \, c \,\phi\, d \,\th\, b \,\ths\, a $. By~\cite[Lemma 0.2]{7}, we will
prove BFC in the moment we  see $a \,
\phi \, b$. There exist unique $a_i,b_i$ satisfying the following
relations: 
\begin{equation}\label{eq:recursion}
\begin{split}
s_1(a,b,c,d)\stackrel{\th}{\equiv} &\  a_1 \stackrel{\ths}{\equiv}
t_1(a,b,c,d)\\ 
s_1(a,b,c,d)\stackrel{\phi}{\equiv} &\  b_1 \stackrel{\phis}{\equiv}
t_1(a,b,c,d)\\ 
& \dots \\
s_{j+1}(a,b,c,d,a_1,b_1,\dots,a_{j},b_{j})\stackrel{\th}{\equiv} & \
a_{j+1}
\stackrel{\;\ths}{\equiv}t_{j+1}(a,b,c,d,a_1,b_1,\dots,a_{j},b_{j}) \\
s_{j+1}(a,b,c,d,a_1,b_1,\dots,a_{j},b_{j})\stackrel{\phi}{\equiv} & \ b_{j+1}
\stackrel{\;\phis}{\equiv}t_{j+1}(a,b,c,d,a_1,b_1,\dots,a_{j},b_{j}) 
\end{split}
\end{equation}
Note that their definition combines schemes in
Corollaries~\ref{l:recursion_aes} and~\ref{l:recursion_bes}. So, by
equations~(\ref{eq:4}) and (\ref{eq:5}) we have, taking $t:=L_\alpha,
R_\alpha$: 

\begin{gather}
\begin{split}\label{eq:6}
L_\alpha(\sigma(a,b,c,d,a_1,b_1,\dots,a_n,b_n)) &\stackrel{\th}{\equiv}
L_\alpha(a,b,c,d,a_1,b_1,\dots)  \stackrel{\ths}{\equiv}
L_\alpha(\sigma^*(a,b,c,d,a_1,b_1,\dots,a_n,b_n)) \\
L_\alpha(\rho(a,b,c,d,a_1,b_1,\dots,a_n,b_n)) &\stackrel{\phi}{\equiv}
L_\alpha(a,b,c,d,a_1,b_1,\dots)  \stackrel{\phis}{\equiv}
L_\alpha(\rho^*(a,b,c,d,a_1,b_1,\dots,a_n,b_n))
\end{split}	\\
\begin{split}\label{eq:12}
R_\alpha(\sigma(a,b,c,d,a_1,b_1,\dots,a_n,b_n)) &\stackrel{\th}{\equiv}
R_\alpha(a,b,c,d,a_1,b_1,\dots)  \stackrel{\ths}{\equiv}
R_\alpha(\sigma^*(a,b,c,d,a_1,b_1,\dots,a_n,b_n)) \\
R_\alpha(\rho(a,b,c,d,a_1,b_1,\dots,a_n,b_n)) &\stackrel{\phi}{\equiv}
R_\alpha(a,b,c,d,a_1,b_1,\dots)  \stackrel{\phis}{\equiv}
R_\alpha(\rho^*(a,b,c,d,a_1,b_1,\dots,a_n,b_n))
\end{split}	
\end{gather}
for every $\alpha$. It can be proved  by an inductive argument that 
$L_\alpha(a,b,c,d,a_1,b_1,\dots,a_n,b_n)=R_\alpha(a,b,c,d,a_1,b_1,\dots,a_n,b_n)$ 
for all $\alpha\neq\varepsilon$, and the proof in \cite{DFC} carries
over \emph{mutatis mutandis}. The reader may find very similar arguments to the those
needed to fulfill this part of the proof in Corollary~\ref{co:Willard_terms}.

\noindent ($\ent$) For each set of variables $Y$, define
\begin{align*}
Y^* &:= Y \cup \{x_{p,q} : p,q \in T(Y)\} \cup \{y_{p,q} : p,q \in
T(Y)\}  \\
Y^{0*} &:= Y \\
Y^{(n+1)*} &:= (Y^{n*})^* \\
Y^\infty &:= \bigcup_{n\geq 1} Y^{n*}
\end{align*}
where $x_{p,q}$ and $y_{p,q}$ are new variables. Take $Z:=\{x,y,z,w\}$
and $F:=F(Z^\infty)$. Define the \emph{index} of $p\in T(Z^\infty)$ as $ind(p) = \min\{j:
p\in T(Z^{j*})\}$; it is evident that if $ind(x_{p,q})\leq
ind(x_{r,s})$, neither $p$ nor $q$ can be terms depending on
$x_{r,s}$. The same holds for $ind(x_{p,q})\leq
ind(y_{r,s})$ and symmetrically, and for  $ind(y_{p,q})\leq
ind(y_{r,s})$. 

Take the following congruences  on $F$:\label{def:delta_infty}
\begin{align*}
\th &:= \Cg(x,z) \o \Cg(y,w) \o \bigvee\{\Cg(p,x_{p,q}): p,q \in F\} & \delta_0 & = \epsilon_0 := \0^F \\
\ths &:= \Cg(x,y) \o \bigvee\{\Cg(x_{p,q},q): p,q \in F\} &
\delta_{n+1} &:= (\th \o \epsilon_n)\cap (\ths \o \epsilon_n)\\ 
\phi &:= \Cg(z,w)  \o \bigvee\{\Cg(p,y_{p,q}): p,q \in
F\}&\epsilon_{n+1} &:= (\phi \o \delta_n)\cap (\phis \o \delta_n)
\\ 
\phis &:=  \bigvee\{\Cg(y_{p,q},q): p,q \in
F\}&\delta_\infty &:= \bigvee_{n\geq 0} \delta_n =  \bigvee_{n\geq 0} \epsilon_n.
\end{align*}
By construction, $\phi \circ \phis = \th \circ \ths = \nabla^F$, $x \, \th
\, z \,\phi\, w \,\th\, y \,\ths\, x $. 
Observe that if $(a,b)\in (\phi\o\delta_\infty) \cap
 (\phis\o\delta_\infty)$ then there exists an $n\geq 0$ such that
$(a,b)\in (\phi\o\delta_n) \cap (\phis\o\delta_n)$. But this
congruence is exactly $\epsilon_{n+1}$, hence $(a,b) \in
\epsilon_{n+1} \subseteq \delta_\infty$. We may conclude $(\phi\o\delta_\infty) \cap
 (\phis\o\delta_\infty) = \delta_\infty $. The same happens with
 $\theta$ and $\ths$,
hence 
\[(\phi\o\delta_\infty)/\delta_\infty \romb 
(\phis\o\delta_\infty)/\delta_\infty = \0 \qquad (\th\o\delta_\infty)/\delta_\infty \romb
(\ths\o\delta_\infty)/\delta_\infty = \0\]
in $F/\delta_\infty$. Then, by BFC we have  $(x/\delta_\infty,y/\delta_\infty) \in
(\phi\o\delta_\infty)/\delta_\infty$ and hence  $(x,y) \in
\phi \o \delta_\infty$. We may find an even integer $N=2k$ such that  $(x,y) \in
\phi \circ^{2N} \delta_N^N$, where $\delta_N^N$ is the result of replacing each
occurrence of ``$\o$'' in the definition of $\delta_N$ by $\circ^N$,
the $n$-fold relational product. Now the terms $L_\alpha$ and $R_\alpha$, for $\alpha$ a word of
length at most $N$ in the alphabet $\{1,\dots,N\}$, can be defined
recursively by using this last congruential equation. Details are
analogous to those in~\cite{DFC}.

\end{proof}
In the next results, we keep the notation of Theorem~\ref{th:malcev_bfc}.
\begin{corollary}\label{co:Willard_terms}
A variety has BFC if and only if
there exist integers $N$ and $n$,  $(2i+2)$-ary terms  $s_i$ and $t_i$
for each
$i=1, \dots, n$ such that for all $A\in\V$ and all  $\theta, \ths, \phi,
\phis \in \CON(A)$ the following holds
\begin{equation}\label{eq:39}
\left.\begin{array}{c}
\Cg(\vec X,  \sigma(\vec X)) \subseteq \th \\
\Cg(\vec X,  \sigma^*(\vec X)) \subseteq \ths \\
\Cg(\vec X, \rho(\vec X))\subseteq \phi \\
\Cg(\vec X,  \rho^*(\vec X))\subseteq \phis
\end{array}\right\}\ent (x,y) \in \phi \o \delta_N.
\end{equation}
for all $x, y, z, w, x_1, y_1, \dots, x_n, y_n$ in $A$.
\end{corollary}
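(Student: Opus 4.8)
The plan is to derive Corollary~\ref{co:Willard_terms} directly from Theorem~\ref{th:malcev_bfc}, using it as a black box together with the congruence identities of Lemma~\ref{l:igualdad_congr}. The statement~(\ref{eq:39}) is really a ``semantic'' reformulation of the syntactic Mal'cev condition: the four hypotheses $\Cg(\vec X,\sigma(\vec X))\subseteq\th$, etc., are exactly the conditions that make $\th,\ths,\phi,\phis$ behave, with respect to the tuples $\sigma(\vec X),\sigma^*(\vec X),\rho(\vec X),\rho^*(\vec X)$, the way the ``generic'' congruences in the proof of Theorem~\ref{th:malcev_bfc} do; and the conclusion $(x,y)\in\phi\o\delta_N$ is the congruential equation from which the terms $L_\alpha,R_\alpha$ were read off. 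So the two directions will both be short translations.

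For $(\Rightarrow)$: assume BFC, so Theorem~\ref{th:malcev_bfc} gives integers $N=2k$, $n$, terms $s_i,t_i$ and terms $L_\alpha,R_\alpha$ satisfying all the identities~(\ref{eq:dfc_N})--(\ref{eq:3}). Fix $A\in\V$, congruences $\th,\ths,\phi,\phis$ and elements $x,\dots,y_n$ satisfying the four containment hypotheses of~(\ref{eq:39}). Now interpret each identity of the Mal'cev condition in $A$ at the tuple $\vec X=(x,y,z,w,x_1,y_1,\dots,x_n,y_n)$. For instance, an identity of the form $L_\alpha(\termphi)\id L_{\alpha1}(\termphi)$ says $L_\alpha(\rho(\vec X))=L_{\alpha1}(\rho(\vec X))$ in $A$; since $\Cg(\vec X,\rho(\vec X))\subseteq\phi$ and homomorphisms (here, term operations) preserve congruences, we get $L_\alpha(\vec X)\mathrel{\phi}L_\alpha(\rho(\vec X))=L_{\alpha1}(\rho(\vec X))\mathrel{\phi}L_{\alpha1}(\vec X)$, i.e.\ $(L_\alpha(\vec X),L_{\alpha1}(\vec X))\in\phi$. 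Doing this uniformly, every identity of Theorem~\ref{th:malcev_bfc} involving $\termphi$ (resp.\ $\termphis$, $\termth$, $\termths$) yields a $\phi$-relation (resp.\ $\phis$-, $\th$-, $\ths$-relation) between the values of the corresponding $L$'s and $R$'s at $\vec X$; the top-level identities~(\ref{eq:dfc_N}) yield $\Delta$-relations, i.e.\ genuine equalities. Chaining these relations according to the same bookkeeping used in~\cite{DFC} (the commented-out block in the proof above is the template: $x=L_\varepsilon$, then $\phi$-steps through the $L_i\,\delta_N^N\,R_i$, with each $\delta_N^N$-step expanding via the even/odd recursion into the nested $\th/\ths$- and $\phi/\phis$-products) gives precisely $(x,y)\in\phi\o\delta_N$, after recalling that $\delta_N$ is built from $\th,\ths,\phi,\phis$ by the recursion $\delta_{m+1}=(\th\o\epsilon_m)\cap(\ths\o\epsilon_m)$, $\epsilon_{m+1}=(\phi\o\delta_m)\cap(\phis\o\delta_m)$, and that the $L_\alpha,R_\alpha$ with $|\alpha|=N$ realize the innermost relational products. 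This is exactly the inductive argument whose full form appears in Corollary~\ref{co:Willard_terms}'s own proof-to-come and which the author says ``carries over \emph{mutatis mutandis}'' from~\cite{DFC}.

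For $(\Leftarrow)$: assume the terms $s_i,t_i$ and integers $N,n$ satisfy~(\ref{eq:39}) for all algebras and all choices of congruences. We must produce the $L_\alpha,R_\alpha$ witnessing Theorem~\ref{th:malcev_bfc}. Apply~(\ref{eq:39}) in the free algebra $F=F(\vec X)$ (or $F(Z^\infty)$ as in the proof of Theorem~\ref{th:malcev_bfc}) with the \emph{canonical} choice $\th:=\Cg(\vec X,\sigma(\vec X))$, $\ths:=\Cg(\vec X,\sigma^*(\vec X))$, $\phi:=\Cg(\vec X,\rho(\vec X))$, $\phis:=\Cg(\vec X,\rho^*(\vec X))$, for which the four hypotheses hold trivially with equality. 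Then~(\ref{eq:39}) delivers $(x,y)\in\phi\o\delta_N$ in $F$; since $\delta_N$ is a finite combination of $\cap$, $\o$ of these four compact congruences, $(x,y)\in\phi\o^{2N}\delta_N^N$ for a suitable replacement of each $\o$ by a finite relational product $\o^N$, exactly as argued after the display introducing $\delta_N^N$ in the proof of Theorem~\ref{th:malcev_bfc}. Reading the intermediate elements of this iterated relational product off as values of terms $L_\alpha(\vec X),R_\alpha(\vec X)$ — and invoking Lemma~\ref{malsev} plus the substitution trick ($\rho(\rho(\vec X))=\rho(\vec X)$, etc.) to upgrade the resulting congruence-membership facts in $F$ into honest identities of $\V$ — gives the terms and identities of Theorem~\ref{th:malcev_bfc}; by that theorem, $\V$ has BFC.

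The main obstacle is bookkeeping rather than any new idea: one must verify that the recursion defining $\delta_N$ (alternating $\th/\ths$ on even levels, $\phi/\phis$ on odd levels, intersected) matches exactly the even/odd case split on $|\alpha|$ in Theorem~\ref{th:malcev_bfc} — the ``$k$'' cut between the $L_{\alpha1},\dots,L_{\alpha k}$ block and the $L_{\alpha(k+1)},\dots,L_{\alpha N}$ block being the syntactic shadow of the intersection of two $N$-fold relational products in $\delta_N^N$ / $\epsilon_N^N$. This matching, and the verification that the preservation arguments (term operations preserve congruences, and $\rho,\sigma$ are idempotent on $T(\vec X)$) go through for \emph{every} one of the identity families~(\ref{eq:dfc_N})--(\ref{eq:3}), is what the author defers to~\cite{DFC} with ``details are analogous''; since it is entirely parallel to the corresponding step for definable factor congruences already carried out there, I would present it at the same level of detail, i.e.\ check one representative identity in each of the four congruence-flavours and assert the rest by symmetry.
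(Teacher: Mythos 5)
Your proposal is correct, and your $(\ent)$ direction is essentially the paper's argument: the same downward induction on $\largo{\alpha}$, chaining the identities of Theorem~\ref{th:malcev_bfc} through the hypothesised containments to land $(L_\alpha(\vec X),R_\alpha(\vec X))$ in $\epsilon_{N-\largo{\alpha}+1}$ or $\delta_{N-\largo{\alpha}+1}$ according to parity. (The paper first reduces to the free algebra $F(\vec X)$ with the canonical congruences $\Cg(\vec X,\sigma(\vec X))$, etc., via Corollary~\ref{coromalsev} and then runs the induction there; you run it directly in an arbitrary $A$ using the containments, which is an equivalent and if anything slightly cleaner packaging.) Where you genuinely diverge is the $(\tne)$ direction. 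The paper does \emph{not} route this back through the Mal'cev condition: it takes complementary factor congruences $\th\romb\ths=\0$, $\phi\romb\phis=\0$ with $a\,\th\,c\,\phi\,d\,\th\,b\,\ths\,a$, solves the recursion~(\ref{eq:recursion}) for the unique $a_i,b_i$ (possible precisely because the pairs are complementary), observes via Lemma~\ref{l:igualdad_congr} that these solutions satisfy the four containments of~(\ref{eq:39}), and then notes that $\th\cap\ths=\phi\cap\phis=\Delta$ forces $\delta_N=\Delta$, so the conclusion collapses to $(a,b)\in\phi$ and BFC follows from Willard's Lemma~0.2. That is a five-line semantic argument. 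Your route instead re-derives the terms $L_\alpha,R_\alpha$ and all the identities of Theorem~\ref{th:malcev_bfc} from~(\ref{eq:39}) instantiated at the canonical congruences, i.e.\ you reprove the hardest half of Theorem~\ref{th:malcev_bfc} with~(\ref{eq:39}) in place of BFC as the starting point. This is logically sound (and it does illuminate why~(\ref{eq:39}) and the syntactic condition are really the same statement), but it is far more work than needed, and it forces you to confront an extra bookkeeping issue the paper avoids: the $N$ handed to you in the hypothesis need not equal the iteration bound $M$ for which $(x,y)\in\phi\circ^{2M}\delta_N^M$, so you must enlarge and pad before the extracted data fit the exact format of Theorem~\ref{th:malcev_bfc}. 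If you keep your architecture, say this explicitly; otherwise I would recommend replacing your $(\tne)$ with the direct factor-congruence instantiation, which is what the corollary's hypothesis is designed for.
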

\begin{proof}
($\tne$) Suppose $\theta, \ths, \phi,
\phis \in \CON(A)$ satisfy
\begin{align*}
\th \times \ths &= \Delta &  x \,\th\, z \, \phi\,& w \,\th\, y\\
\phi \times \phis &= \Delta & x \, \ths\, & y
\end{align*}
and $(x,y)\in \th$. As we saw in the first part of the proof of 
Theorem~\ref{th:malcev_bfc}, the congruential equations in the 
antecedent of~(\ref{eq:39})  have (unique) solution for the
indeterminates  $x_1, y_1, \dots,
x_n, y_n$. The construction is given by
equations~(\ref{eq:recursion}), and Lemma~\ref{l:igualdad_congr} says
that these equations are the same as those above.

Since $\th\cap \ths = \phi 
\cap \phis = \Delta$,  we have $\delta_N=\Delta$  
and we conclude $(x,y) \in \phi$. 
Hence we proved that the variety has BFC. 

 ($\ent$) Suppose $\V$ has BFC. The integers $N$ and $n$ 
and the terms are provided by
Theorem~\ref{th:malcev_bfc}. 
Thanks to Corollary~\ref{coromalsev},
it suffices to verify the result in the instance given by
$A = F (\vec X)
= F(x,y,z,w, x_1, y_1, \dots, x_n, y_n)$ and the congruences
\begin{align*}
\theta& = \Cg(\vec X, \sigma(\vec X))  & \phi &= \Cg(\vec X, \rho(\vec X))  \\
\ths& = \Cg(\vec X, \sigma^*(\vec X)) & \phis &= \Cg(\vec X, \rho^*(\vec X)).  
\end{align*}
In this context, we will run an inductive argument to show  
that the terms $L_\alpha, R_\alpha$ witness that $(x,y)\in
\phi\o \delta_N$. (This argument is similar to the ($\tne$)-part of the proof of 
Theorem~\ref{th:malcev_bfc}.)

 Take $\alpha$ such that $\largo{\alpha}=N$,
then
\begin{align*}
L_\alpha(x,y,z,w,x_1,y_1,\dots,x_n,y_n)  & \stackrel{\phi}{\equiv} 
L_\alpha(\rho(x,y,z,w,x_1,y_1,\dots,x_n,y_n))
&& \text{by definition of $\phi$}\\
&=  R_{\alpha}(\rho(x,y,z,w,x_1,y_1,\dots,x_n,y_n)) &&  \text{using identities~(\ref{eq:dfc_N})}\\
&  \stackrel{\phi}{\equiv}  R_{\alpha}(x,y,z,w,x_1,y_1,\dots,x_n,y_n) && \text{by
definition of $\phi$}\\
\intertext{And,}
L_\alpha(x,y,z,w,x_1,y_1,\dots,x_n,y_n)  & \stackrel{\;\phis}{\equiv} 
L_\alpha(\rho^*(x,y,z,w,x_1,y_1,\dots,x_n,y_n))
&& \text{by definition of $\phi$}\\
&=  R_{\alpha}(\rho^*(x,y,z,w,x_1,y_1,\dots,x_n,y_n)) &&  \text{using identities~(\ref{eq:dfc_N})}\\
&  \stackrel{\;\phis}{\equiv}  R_{\alpha}(x,y,z,w,x_1,y_1,\dots,x_n,y_n) && \text{by
definition of $\phi$}
\end{align*} 
Hence $\bigl(L_\alpha(x,y,z,w,x_1,y_1,\dots,x_{n},y_{n}),R_\alpha(x,y,z,w,x_1,y_1,\dots,x_{n},y_{n})\bigr)\in
\phi \cap \phis = \epsilon_{1}$ (recall the definition of
$\epsilon_n$ in page~\pageref{def:delta_infty}).

Suppose $\alpha\neq\varepsilon$ 
has odd length $\largo{\alpha}<N$ and assume 
\[L_{\alpha
j}(x,y,z,w,x_1,y_1,\dots,x_{n},y_{n})  \stackrel{\epsilon_{N-\largo{\alpha}}}{\equiv} R_{\alpha j}(x,y,z,w,x_1,y_1,\dots,x_{n},y_{n})\]
 for every
$j=1,\dots,N$. We check that 
\[L_\alpha(x,y,z,w,x_1,y_1,\dots,x_{n},y_{n})
\stackrel{\th\,\o\,\epsilon_{N-\largo{\alpha}}}{\equiv}
R_\alpha(x,y,z,w,x_1,y_1,\dots,x_{n},y_{n}):\]
\begin{align*}
L_\alpha(x,y,z,w,x_1,y_1,\dots,x_n,y_n)  & \stackrel{\th}{\equiv} 
L_\alpha(\sigma(x,y,z,w,x_1,y_1,\dots,x_n,y_n))
&& \text{by definition of $\th$} \\
&= L_{\alpha 1}(\sigma(x,y,z,w,x_1,y_1,\dots,x_n,y_n))
&& \text{by identities~(\ref{eq:2})} \\
 & \stackrel{\th}{\equiv}  L_{\alpha 1}(x,y,z,w,x_1,y_1,\dots,x_n,y_n) &&
\text{by definition of $\th$} \\
&\!\!\!\!\stackrel{\epsilon_{N-\largo{\alpha}}}{\equiv}  R_{\alpha 1}(x,y,z,w,x_1,y_1,\dots,x_n,y_n)
&& \text{by inductive hypothesis} \\
 & \stackrel{\th}{\equiv}  R_{\alpha 1}(\sigma(x,y,z,w,x_1,y_1,\dots,x_n,y_n))
&& \text{by definition of $\th$}  \\
 & \stackrel{\th}{\equiv}  \ \cdots && \text{using~(\ref{eq:2})}\\
& \!\!\!\!\stackrel{\epsilon_{N-\largo{\alpha}}}{\equiv}  \cdots && \text{and iterating\dots}\\ 
& = R_{\alpha k}(\sigma(x,y,z,w,x_1,y_1,\dots,x_n,y_n)) \\
& = R_{\alpha}(\sigma(x,y,z,w,x_1,y_1,\dots,x_n,y_n)) && \text{using
identities~(\ref{eq:2})}\\
&  \stackrel{\th}{\equiv}  R_{\alpha} (x,y,z,w,x_1,y_1,\dots,x_n,y_n),
\end{align*} 
In the same way we
 show
\[L_\alpha(x,y,z,w,x_1,y_1,\dots,x_{n},y_{n})  \stackrel{\ths\,\o\,
  \epsilon_{N-\largo{\alpha}}}{\equiv}
R_\alpha(x,y,z,w,x_1,y_1,\dots,x_{n},y_{n}):\] 
\begin{align*}
L_\alpha(x,y,z,w,x_1,y_1,\dots,x_n,y_n)  & \stackrel{\ths}{\equiv} 
L_\alpha(\sigma^*(x,y,z,w,x_1,y_1,\dots,x_n,y_n))
&& \text{by definition of $\ths$} \\
&= L_{\alpha (k+1)}(\sigma^*(x,y,z,w,x_1,y_1,\dots,x_n,y_n))
&& \text{by identities~(\ref{eq:3})} \\
 & \stackrel{\ths}{\equiv}  L_{\alpha (k+1)}(x,y,z,w,x_1,y_1,\dots,x_n,y_n) &&
\text{by definition of $\ths$} \\
&\!\!\!\!\stackrel{\epsilon_{N-\largo{\alpha}}}{\equiv}  R_{\alpha (k+1)}(x,y,z,w,x_1,y_1,\dots,x_n,y_n)
&& \text{by ind. hypothesis} \\
 & \stackrel{\ths}{\equiv}  R_{\alpha (k+1)}(\sigma^*(x,y,z,w,x_1,y_1,\dots,x_n,y_n))
&& \text{by definition of $\ths$}  \\
 & \stackrel{\ths}{\equiv}  \ \cdots && \text{using~(\ref{eq:3})}\\
& \!\!\!\!\stackrel{\epsilon_{N-\largo{\alpha}}}{\equiv}  \cdots && \text{and iterating\dots}\\ 
& = R_{\alpha N}(\sigma^*(x,y,z,w,x_1,y_1,\dots,x_n,y_n)) \\
& = R_{\alpha}(\sigma^*(x,y,z,w,x_1,y_1,\dots,x_n,y_n)) && \text{using
identities~(\ref{eq:3})}\\
&  \stackrel{\ths}{\equiv}  R_{\alpha} (x,y,z,w,x_1,y_1,\dots,x_n,y_n),
\end{align*} 
and hence we obtain
\[\bigl(L_\alpha(x,y,z,w,x_1,y_1,\dots),R_\alpha(x,y,z,w,x_1,y_1,\dots\bigr)\in
(\th\o \epsilon_{N-\largo{\alpha}}) \cap (\ths\o \epsilon_{N-\largo{\alpha}}) = \delta_{N-\largo{\alpha}+1}\]

Now suppose $\alpha\neq\varepsilon$ has even length $\largo{\alpha}<N$
and assume
\[L_{\alpha
j}(x,y,z,w,x_1,y_1,\dots,x_{n},y_{n})  \stackrel{\delta_{N-\largo{\alpha}}}{\equiv} R_{\alpha j}(x,y,z,w,x_1,y_1,\dots,x_{n},y_{n})\]
 for every
$j=1,\dots,N$. Then
\begin{align*}
L_\alpha(x,y,z,w,x_1,y_1,\dots,x_n,y_n)  & \stackrel{\phi}{\equiv} 
L_\alpha(\rho(x,y,z,w,x_1,y_1,\dots,x_n,y_n))
&& \text{by definition of $\phi$}\\
&= L_{\alpha 1}(\rho(x,y,z,w,x_1,y_1,\dots,x_n,y_n))
&& \text{by identity~(\ref{eq:dfc_par_phi_1})} \\
 & \stackrel{\phi}{\equiv}  L_{\alpha 1}(x,y,z,w,x_1,y_1,\dots,x_n,y_n) &&
\text{by definition of $\phi$} \\
&\!\!\!\!\stackrel{\delta_{N-\largo{\alpha}}}{\equiv}  R_{\alpha 1}(x,y,z,w,x_1,y_1,\dots,x_n,y_n)
&& \text{by inductive hypothesis} \\
 & \stackrel{\phi}{\equiv}  R_{\alpha 1}(\rho(x,y,z,w,x_1,y_1,\dots,x_n,y_n))
&& \text{by definition of $\phi$}\\
 & \stackrel{\phi}{\equiv}  \ \cdots && \text{using~(\ref{eq:dfc_par_phi})}\\ 
&\!\!\!\!\stackrel{\delta_{N-\largo{\alpha}}}{\equiv}  \cdots && \text{and iterating\dots}\\
& = R_{\alpha k}(\rho(x,y,z,w,x_1,y_1,\dots,x_n,y_n)) \\
& = R_{\alpha}(\rho(x,y,z,w,x_1,y_1,\dots,x_n,y_n)) && \text{using
identity~(\ref{eq:dfc_par_phi_k})}\\
&  \stackrel{\phi}{\equiv}  R_{\alpha} (x,y,z,w,x_1,y_1,\dots,x_n,y_n)&& \text{by definition of $\phi$}
\end{align*} 
proves $\bigl(L_\alpha(x,y,z,w,x_1,y_1,\dots,x_{n},y_{n}),R_\alpha(x,y,z,w,x_1,y_1,\dots,x_{n},y_{n})\bigr)\in
\phi\o \delta_{N-\largo{\alpha}}$. We can see analogously (using
$\rho^*$ and identities~(\ref{eq:dfc_par_phi*})) that  
\[\bigl(L_\alpha(x,y,z,w,x_1,y_1,\dots,x_{n},y_{n}),R_\alpha(x,y,z,w,x_1,y_1,\dots,x_{n},y_{n})\bigr)\in
\phis\o \delta_{N-\largo{\alpha}},\] 
therefore
\[\bigl(L_\alpha(x,y,z,w,x_1,y_1,\dots),R_\alpha(x,y,z,w,x_1,y_1,\dots)\bigr)\in (
\phi\o \delta_{N-\largo{\alpha}}) \cap (\phis\o
\delta_{N-\largo{\alpha}}) = \epsilon_{N-\largo{\alpha}+1}\]
Finally, for $\alpha=\varepsilon$, and noting that
$\delta_{N-\largo{\alpha}} =\delta_{N}$, we have:
\begin{align*}
x &=L_\varepsilon(\rho(x,y,z,w,x_1,y_1,\dots,x_n,y_n)) &&  \text{using identities~(\ref{eq:1})}\\ 
&= L_{ 1}(\rho(x,y,z,w,x_1,y_1,\dots,x_n,y_n))
&& \text{by identity~(\ref{eq:dfc_0_phi_1})} \\
 & \stackrel{\phi}{\equiv}  L_{ 1}(x,y,z,w,x_1,y_1,\dots,x_n,y_n) &&
\text{by definition of $\phi$}\\
& \stackrel{\delta_{N}}{\equiv} R_{ 1}(x,y,z,w,x_1,y_1,\dots,x_n,y_n)
&& \text{by inductive hypothesis} \\
 & \stackrel{\phi}{\equiv}  R_{ 1}(\rho(x,y,z,w,x_1,y_1,\dots,x_n,y_n))
&& \text{by definition of $\phi$}\\
 & \stackrel{\phi}{\equiv} \  \cdots && \text{using
  identities~(\ref{eq:dfc_0_phi})}\\
& \stackrel{\delta_{N}}{\equiv}\ \cdots && \text{and iterating\dots}\\
& = R_{N}(\rho(x,y,z,w,x_1,y_1,\dots,x_n,y_n)) \\
& = R_\varepsilon(\rho(x,y,z,w,x_1,y_1,\dots,x_n,y_n)) && \text{using
identity~(\ref{eq:dfc_0_phi_N})}\\
& = y  &&  \text{using identities~(\ref{eq:1})}
\end{align*} 
This proves $(x,y)\in\phi\o\delta_{N}$.
\end{proof}
This corollary is a variant of Willard's original condition. He states 
that a variety has BFC if there exist $n\geq 0$, and terms
\begin{align*}
s_1(x,y,z,w),&\ t_1(x,y,z,w) \\
s_2(x,y,z,w,u_1),&\ t_2(x,y,z,w,u_1) \\
s_3(x,y,z,w,u_1,u_2),&\ t_2(x,y,z,w,u_1,u_2) \\
& \vdots \\
s_n(x,y,z,w,u_1,\dots,u_{n-1}),&\ t_n(x,y,z,w,u_1,\dots,u_{n-1}) 
\end{align*}
such that $\forall A\in \V, \forall \th, \ths, \phi, \phis \in
\CON(A), \forall a,b,c,d,e_1,\dots,e_n\in A$, if $a
\stackrel{\th}{\equiv} c  \stackrel{\phi}{\equiv} d
\stackrel{\th}{\equiv} b  \stackrel{\ths}{\equiv} a$ and
\begin{align*}
s_i(a,b,c,d,e_1,\dots,e_{i-1}) & \stackrel{\th}{\equiv} e_i
\stackrel{\ths}{\equiv} t_i(a,b,c,d,e_1,\dots,e_{i-1}) \text{ ($1\leq i
\leq n$, $i$ odd)} \\
s_i(a,b,c,d,e_1,\dots,e_{i-1}) & \stackrel{\phi}{\equiv} e_i
\stackrel{\phis}{\equiv} t_i(a,b,c,d,e_1,\dots,e_{i-1}) \text{ ($2\leq i
\leq n$, $i$ even)}
\end{align*}
then $(a,b) \in \phi \o \delta_\infty$. 

The procedure of using  $\delta_\infty$, to force a pair of
congruences in a free algebra freely generated by an 
infinite set to be  factor complementary, already appears as part of
Vaggione's work on Boolean-representable varieties~\cite{va0}. 

In the next corollary, we obtain an infinitary ``formula'' which is
our first approximation to $\pi$.
\begin{corollary}\label{p:def_infinitaria}
Let $A = A_0 \times A_1$  be  an algebra in a variety with BFC, and let
$\Pi(x,y,z,w)$ be the following predicate:
\begin{equation}\label{eq:41}
\exists x_1 \forall y_1 \dots \exists x_n \forall y_n  \ \ \Cg^A(\vec
 X, \sigma(\vec X))  \cap \Cg^A(\vec X, \sigma^*(\vec X))  = \Delta^A  
 \end{equation}
Then, for all $a,b\in A_0$ and $a',b',c'\in A_1$,
$\Pi(\<a,a'\>,\<b,b'\>,\<a,c'\>,\<b,c'\>)$ holds in
$A$ if and only if $a' = b'$.
\end{corollary}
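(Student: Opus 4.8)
\textbf{Proof plan for Corollary~\ref{p:def_infinitaria}.}
The plan is to reduce the statement to a purely componentwise calculation, using the fact that in $A_0\times A_1$ every factor congruence, principal congruence, and join/meet of congruences decomposes as a product of the corresponding objects in $A_0$ and $A_1$. Write $a_0:=\langle a,a'\rangle$, $b_0:=\langle b,b'\rangle$, $c_0:=\langle a,c'\rangle$, $d_0:=\langle b,c'\rangle$. Observe that these four elements were chosen precisely so that, setting $\theta:=\eta_0$ and $\phi:=\eta_1$ (the kernels of the two projections of $A_0\times A_1$) and $\theta^*:=\eta_1$, $\phi^*:=\eta_0$, one has $\theta\times\theta^*=\Delta$, $\phi\times\phi^*=\Delta$, and $c_0\mathrel{\theta}a_0\mathrel{\phi}d_0\mathrel{\theta}b_0\mathrel{\theta^*}a_0$ — in other words, exactly the configuration appearing in the ($\tne$)-direction of Corollary~\ref{co:Willard_terms}. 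This is the observation that makes the corollary essentially a restatement of that result in the special case of the canonical decomposition.

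First I would treat the ($\Leftarrow$) direction: if $a'=b'$, then $a_0$ and $b_0$ differ only in their $A_0$-component, so $(a_0,b_0)\in\eta_1=\phi^*$ and also $(a_0,b_0)\in\eta_0\circ\eta_1$ trivially; more to the point, I want to produce witnesses $x_1,\dots,x_n$ (for the existential quantifiers) making the displayed intersection equal to $\Delta^A$ regardless of the choices of $y_1,\dots,y_n$. Here I invoke the first part of the proof of Theorem~\ref{th:malcev_bfc} (equivalently the unique-solvability noted in Corollary~\ref{co:Willard_terms}): given the configuration above, the congruential equations~(\ref{eq:recursion}) have a unique solution for the $x_i,y_i$; choosing $x_i$ to be exactly these solutions, Lemma~\ref{l:igualdad_congr} identifies $\Cg^A(\vec X,\sigma(\vec X))$ with a join of principal congruences that is contained in $\theta=\eta_0$ and $\Cg^A(\vec X,\sigma^*(\vec X))$ with one contained in $\theta^*=\eta_1$; since $\eta_0\cap\eta_1=\Delta^A$, the intersection is $\Delta^A$ for every choice of the universally quantified $y_i$. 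Hence $\Pi$ holds.

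For the ($\Rightarrow$) direction I argue contrapositively: suppose $a'\neq b'$ and show $\Pi$ fails, i.e. for every choice of $x_1,\dots,x_n$ there are $y_1,\dots,y_n$ with $\Cg^A(\vec X,\sigma(\vec X))\cap\Cg^A(\vec X,\sigma^*(\vec X))\neq\Delta^A$. The idea is that whatever the $x_i$ are, we may pick the $y_i$ so that the $A_0$-components of the $\vec X$-tuple and of $\sigma(\vec X)$, $\sigma^*(\vec X)$ all collapse onto the configuration associated with the \emph{trivial} decomposition of $A_0$, forcing $\theta$ and $\theta^*$ (now computed inside the relevant quotient) to behave like $\nabla$ and $\Delta$; concretely, one uses the componentwise decomposition $\Cg^{A_0\times A_1}=\Cg^{A_0}\times\Cg^{A_1}$ to write the left-hand intersection as a product, where the $A_0$-factor is forced to be $\nabla^{A_0}$ (because the relevant generators already include $(a,b)$ with $a\neq b$ via the $x=a$, $z=a$, $w=b$ pattern of $\sigma$) while the $A_1$-factor, by the same token, will contain the pair $(a',b')\neq(a',a')$ coming from $x^1=a'$, $w^1=b'$; the resulting product congruence is not $\Delta^A$. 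The \textbf{main obstacle} I anticipate is precisely the bookkeeping in this last step: one must verify carefully that the presence of the universal quantifiers does not let the adversary's choice of $y_i$ rescue triviality of the intersection — that is, that the $A_1$-component of $\sigma(\vec X)$ relates $a'$ to $b'$ no matter how $y_1,\dots,y_n$ are chosen, which follows from the recursion defining $\sigma$ (where $x:=a$, $w:=b$, $y_j:=b_j$ are assigned directly and the $x_j$ are generated by the $s_j$, so the first four coordinates — hence the ``active'' pair $(a',b')$ in the $A_1$-component — are never touched). Modulo that verification, the statement follows by the same line of reasoning as Corollary~\ref{co:Willard_terms}, applied to the two projection kernels of $A=A_0\times A_1$.
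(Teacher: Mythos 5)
Your ($\tne$) direction is sound and is essentially the paper's argument: the witnesses $x_i$ are chosen adaptively by $s_{i+1}(\vec X)\stackrel{\ker\proj_0}{\equiv}x_{i+1}\stackrel{\ker\proj_1}{\equiv}t_{i+1}(\vec X)$ in response to the adversary's $y_1,\dots,y_i$, and Lemma~\ref{l:igualdad_congr} (or Corollary~\ref{l:recursion_aes}) then gives $\Cg^A(\vec X,\sigma(\vec X))\subseteq\ker\proj_0$ and $\Cg^A(\vec X,\sigma^*(\vec X))\subseteq\ker\proj_1$, the latter containment using $a'=b'$ through the generator $\Cg(x,y)$.

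The ($\ent$) direction, however, has a genuine gap. Your contrapositive hinges on the ``componentwise decomposition $\Cg^{A_0\times A_1}=\Cg^{A_0}\times\Cg^{A_1}$'', and that identity is false in general: a join of principal congruences of a direct product is contained in, but need not equal, the product of its componentwise projections --- skew congruences are precisely the phenomenon that BFC is meant to control. Consequently, from $a'\neq b'$ you cannot conclude that $\Cg^A(\vec X,\sigma(\vec X))\cap\Cg^A(\vec X,\sigma^*(\vec X))$ contains a nontrivial pair: both congruences project onto congruences of $A_1$ relating $a'$ to $b'$, but that gives an upper bound on each of them, not a lower bound on their intersection. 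A symptom of the problem is that your argument for this half never uses BFC. The paper's proof is direct rather than contrapositive: given the prover's witnesses $x_i$, it chooses the $y_i$ by the reversed recursion $s_{i+1}\stackrel{\ker\proj_1}{\equiv}y_{i+1}\stackrel{\ker\proj_0}{\equiv}t_{i+1}$, so that Corollary~\ref{l:recursion_bes} yields $\Cg^A(\vec X,\rho(\vec X))\subseteq\ker\proj_1$ and $\Cg^A(\vec X,\rho^*(\vec X))\subseteq\ker\proj_0$, and then applies Corollary~\ref{co:Willard_terms} with $\th:=\Cg(\vec X,\sigma(\vec X))$ and $\ths:=\Cg(\vec X,\sigma^*(\vec X))$ --- \emph{not} the projection kernels --- together with $\phi:=\ker\proj_1$ and $\phis:=\ker\proj_0$. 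Hypothesis~(\ref{eq:41}) enters exactly here: it makes $\th\cap\ths=\Delta^A$, which together with $\ker\proj_0\cap\ker\proj_1=\Delta^A$ forces $\delta_N=\Delta^A$, whence $(x,y)\in\phi\o\delta_N=\ker\proj_1$, i.e.\ $a'=b'$. Your closing plan of invoking Corollary~\ref{co:Willard_terms} ``applied to the two projection kernels'' misses this substitution, which is the step where the predicate $\Pi$ actually does its work.
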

\begin{proof}
We will need to do the following definitions:
\begin{align*}
  x &:=\<a,a'\>  & y &:=\<b,b'\>   \\
  z &:= \<a,c'\> & w &:=\<b,c'\>,
\end{align*}
hence we have $ \Cg(x,z)\o \Cg(y,w) \subseteq \ker\proj_0 $  and
$\Cg(z,w)\subseteq \ker\proj_1$.

($\tne$) Suppose $(x,y) \in \ker \proj_1$. Take $x_1$ such that
 \[s_1(x, y, z, w)\stackrel{\ker\proj_0}{\equiv} \ x_1 \stackrel{\ker\proj_1}{\equiv}
t_1(x, y, z, w)\]
and assuming $x_i$ has already been chosen  and $y_i$ is given, let
\begin{equation*}
s_{i+1}(x, y, z, w, x_1, y_1,\dots, x_i, y_i)\stackrel{\ker\proj_0}{\equiv} \ x_{i+1}
\stackrel{\;\ker\proj_1}{\equiv}t_{i+1}(x, y, z, w, x_1, y_1, \dots, x_i,
y_{i}). 
\end{equation*}
By means of this procedure, and taking into account
Corollary~\ref{l:recursion_aes}, we may conclude that $\Cg^A(\vec
 X, \sigma(\vec X)) \subseteq \ker\proj_0$ and $ \Cg^A(\vec X, \sigma^*(\vec
X)) \subseteq \ker\proj_1$. Since $\ker\proj_0 \cap \ker\proj_1 = \Delta^A$, we have
(\ref{eq:41}).
\smallskip 

($\ent$) Suppose (\ref{eq:41}) holds. Take $y_1$ such that 
\[s_1(x,y,z, w)\stackrel{\ker\proj_1}{\equiv} \ y_1 \stackrel{\ker\proj_0}{\equiv}
t_1(x,y,z, w).\]
(Note: the 
order of congruences is reversed.)
Let $x_1$ given by the outer existential quantifier of 
(\ref{eq:41}). Assuming $y_{i}$  
is already chosen  and $x_i$ is the  corresponding  witness for (\ref{eq:41}), let
\begin{equation*}
s_{i+1}(x,y,z, w,x_1,y_1,\dots,x_{i},y_{i})\stackrel{\ker\proj_1}{\equiv} \ y_{i+1}
\stackrel{\;\ker\proj_0}{\equiv}t_{i+1}(x,y,z,w,x_1,y_1,\dots,x_{i},y_{i}).
\end{equation*}
Corollary~\ref{l:recursion_bes} ensures that  $\Cg^A(\vec
 X, \rho(\vec X)) \subseteq \ker\proj_1$ and $ \Cg^A(\vec X, \rho^*(\vec
X)) \subseteq \ker\proj_0$.

Take in Corollary~\ref{co:Willard_terms}
\begin{align*}
\th& := \Cg(\vec X, \sigma(\vec X))  & \phi &:=  \ker\proj_1 \\
\ths& := \Cg(\vec X, \sigma^*(\vec X)) & \phis &:= \ker\proj_0
\end{align*}
We thus obtain $(x,y)\in \phi\o \delta_N$. Since $ \phi \cap \phis =\ker\proj_1 \cap
 \ker\proj_0 = \Delta^A$ and the same holds for $\th, \ths$, we have
 $\delta_N =\Delta^A$ and hence $(x,y) \in \phi = \ker\proj_1$. This is
 the same to say $a^1 = b^1$.
\end{proof}

Though ``formula''~(\ref{eq:41}) is not in first-order logic, it
corresponds to a formula of the infinitary logic $L_{\kappa^+\omega}$
(here $\kappa$ is the cardinal of the language of $\V$ plus $\omega$), since its ``matrix''  $ \Cg^A(\vec
 X, \sigma(\vec X))  \cap \Cg^A(\vec X, \sigma^*(\vec X))  = \Delta^A$
 can be replaced by an infinite conjunction of quasi-identities. This
 can be seen by considering principal congruence formulas (recall Lemma~\ref{malsev}).
We may write ``$\Cg(\vec a, \vec b) = \Delta$'' in the following fashion:
\[ \bigwedge_{\xi \text{ PCF}} \ \forall x,y \, \forall \vec u_\xi \ \xi(x,y,\vec a, \vec b,
\vec u_\xi) \impl x = y.\]
In the same way,
\[ \bigwedge_{\xi, \; \zeta \text{ PCF}} \ \forall x,y \,\forall \vec u_\xi, \vec v_\zeta\; :\; \xi\bigl(x,y, \vec
X, \sigma(\vec X), \vec u_\xi\bigr)  \y \zeta\bigl(x,y, \vec X,
\sigma^*(\vec X), \vec v_\zeta\bigr)   \impl x=y,\]
is equivalent to  ``$\Cg^A(\vec
 X, \sigma(\vec X))  \cap \Cg^A(\vec X, \sigma^*(\vec X))  =
\Delta^A$''. 

In the next Section  we will see that it is indeed 
possible to find a first-order formula with a similar syntactic structure that
satisfies property (*).

\section{Property (*) and BFC}\label{sec:willards_pi}
Let $\V$ be a variety with BFC. By Theorem~\ref{th:malcev_bfc}, we may
define the following formulas in the language of $\V$:
 \[ \Psi_m := \bigwedge_{\largo{\alpha} = m} 
\Biggl(\Bigl(\bigwedge_{\gamma\neq\varepsilon}
L_{\alpha\gamma}(\vec X) = R_{\alpha\gamma}(\vec X)\Bigr)
\ \impl \ L_\alpha(\vec X) = R_\alpha(\vec X)\Biggr). \]
where every word-subindex moves over  words of length
less than or equal to $N$; so, any expression of the form 
``$\bigwedge_{\gamma\neq\varepsilon} L_{\alpha\gamma} = R_{\alpha\gamma} $'' should be
read as ``$\bigwedge \{ L_{\alpha\gamma} = R_{\alpha\gamma}  : \gamma\neq\varepsilon
\text{ and } \largo{\alpha\gamma}\leq N \}$''. Thus, if
$m> N$, $\Psi_{m} = \mathit{true}$ (empty conjunction) and  $\Psi_N =
\bigl(\bigwedge_{\largo{\beta}=N} L_\beta(\vec X)=R_\beta(\vec X)\bigr)$ (the
antecedent ``vanishes''). 

The formulas $\Psi_m$ will be the building blocks for constructing a
formula $\Phi_2$ that satisfies the elementary requirements of
property (*). But  it is not immediate that $\Phi_2$ will satisfy the
necessary preservation property. Nevertheless, in the context of $\V$
we may prove this. Readily, there is a formula  $ \Phi_1(x,y,z,w)$
valid in $\V$ such that $\Phi_1 \y \Phi_2$ is preserved by direct
products and direct factors.

The following lemma defines $\Phi_1$ and proves its validity over $\V$. 
\begin{lemma}\label{l:deb_q_centro}
Let $\V$ be a variety with BFC. Then 
\begin{equation}\label{eq:deb_q_centro}
\V \models \Phi_1(\vx)\ :=\ 
\exists y_1 \forall x_1  \dots  \exists y_n \forall x_n \bigwedge_{m = 1}^{k}
\Psi_{2m}
\end{equation}
with $n,k$ as in Theorem~\ref{th:malcev_bfc}.  
\end{lemma}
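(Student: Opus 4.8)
The plan is to Skolemize the quantifier block $\exists y_1\forall x_1\dots\exists y_n\forall x_n$ by hand. Fix $A\in\V$ and $x,y,z,w\in A$, and, given $x_1,\dots,x_{i-1}$, choose
\[y_i:=t_i(x,y,z,w,x_1,y_1,\dots,x_{i-1},y_{i-1}),\]
with the $t_i$ supplied by Theorem~\ref{th:malcev_bfc}. Since this expression mentions only $x,y,z,w$ and the earlier $x_1,\dots,x_{i-1}$, it is a legitimate witness for the nesting in $\Phi_1$; in particular $y_1=t_1(x,y,z,w)$ does not depend on any $x_j$. It then suffices to show that for \emph{every} choice of the universally quantified $x_1,\dots,x_n$ the tuple $\vec X=(x,y,z,w,x_1,y_1,\dots,x_n,y_n)$ satisfies $\bigwedge_{m=1}^{k}\Psi_{2m}$.

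The crux is that this $\vec X$ is a fixed point of $\rho^*$, i.e.\ $\rho^*(\vec X)=\vec X$: by the definition of $\rho^*$ the first four coordinates and every $x_j$ are left untouched and each $y_j$ is replaced by $t_j$ evaluated at the preceding coordinates, which is exactly how we defined $y_j$, so a one-line induction on $j$ gives $\rho^*(\vec X)=\vec X$. I expect this to be \emph{the} point: none of $\rho,\sigma,\sigma^*$ has such fixed points in general (each alters the third or fourth coordinate — e.g.\ $\vec X=\rho(\vec X)$ would force $z=w$), so it is precisely the shape of $\rho^*$, which modifies only the $y$-coordinates, that makes everything collapse.

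With $\rho^*(\vec X)=\vec X$ in hand, I would finish by specializing the identities of Theorem~\ref{th:malcev_bfc}. For $|\alpha|=N$ (the case $m=k$), the $\rho^*$-half of~\eqref{eq:dfc_N} immediately gives $L_\alpha(\vec X)=L_\alpha(\rho^*(\vec X))=R_\alpha(\rho^*(\vec X))=R_\alpha(\vec X)$, so the unconditional $\Psi_N$ holds. For a word $\alpha$ of even length $0<|\alpha|<N$, substituting $\rho^*(\vec X)=\vec X$ into~\eqref{eq:dfc_par_phi*} turns those identities into $L_\alpha(\vec X)=L_{\alpha(k+1)}(\vec X)$, $R_{\alpha j}(\vec X)=L_{\alpha(j+1)}(\vec X)$ for $k+1\le j\le N-1$, and $R_{\alpha N}(\vec X)=R_\alpha(\vec X)$. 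Feeding in the antecedent of $\Psi_{|\alpha|}$ at $\alpha$ — which in particular provides $L_{\alpha j}(\vec X)=R_{\alpha j}(\vec X)$ for $j=k+1,\dots,N$ (legitimate conjuncts, since $|\alpha j|=|\alpha|+1\le N$) — and telescoping
\[L_\alpha=L_{\alpha(k+1)}=R_{\alpha(k+1)}=L_{\alpha(k+2)}=R_{\alpha(k+2)}=\cdots=R_{\alpha N}=R_\alpha\]
(all evaluated at $\vec X$) yields $L_\alpha(\vec X)=R_\alpha(\vec X)$, i.e.\ the implication of $\Psi_{|\alpha|}$ at $\alpha$. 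Letting $\alpha$ range over all words of the relevant even length gives $\Psi_{2m}(\vec X)$, and since the whole computation is uniform in $x_1,\dots,x_n$ we conclude $\V\models\Phi_1$.

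Besides the fixed-point observation, the only thing to keep straight is that a single chain — the ``$\rho^*$-chain'' inside~\eqref{eq:dfc_par_phi*}, indices $k+1$ through $N$ — already does the job; one never needs the companion $\rho$-chain (indices $1$ through $k$), which would require $\vec X=\rho(\vec X)$ and hence $z=w$. Everything after that is the same telescoping bookkeeping as in the inductive argument of Corollary~\ref{co:Willard_terms}, here in the degenerate situation where all the congruences involved are $\Delta$.
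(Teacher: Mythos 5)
Your proposal is correct and follows essentially the same route as the paper's own proof: choose the witnesses $y_i:=t_i(x,y,z,w,x_1,y_1,\dots,x_{i-1},y_{i-1})$, observe that the resulting tuple is a fixed point of $\rho^*$, and then discharge $\Psi_N$ via~(\ref{eq:dfc_N}) and the conditional conjuncts of $\Psi_{2m}$ by telescoping through the $\rho^*$-chain of~(\ref{eq:dfc_par_phi*}). The only difference is cosmetic: the paper evaluates the identities at $\rho^*(\vec X)$ and then invokes the fixed-point equation, whereas you substitute the fixed-point equation into the identities first.
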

\begin{proof}
Suppose $a,b,c,d\in A\in\V$. Take $b_1 := t_1(a,b,c,d)$. Assuming $b_i$ is
already chosen and $a_{i}$ is given, define 
\[b_{i+1}:= t_{i+1}(\vc,a_1,b_1,\dots,a_{i},b_{i}).\]
The construction of $b_i$'s  ensures 
\begin{equation}\label{eq:9}
(\vc,a_1,b_1,\dots,a_{n},b_{n}) =\rho^*(\vc,a_1,b_1,\dots,a_{n},b_{n}).
\end{equation}
Hence we have
that for each $\beta$ with $\largo{\beta}=N$,
\begin{equation*}
L_\beta(\vc,a_1,b_1,\dots,a_{n},b_{n}) = R_\beta(\vc,a_1,b_1,\dots,a_{n},b_{n}) 
\end{equation*}
by equations~(\ref{eq:dfc_N}), and we conclude $A\models \Psi_N(\vc,a_1,b_1,\dots,a_{n},b_{n})$.  

Take nonempty $\alpha$ with  $0<\largo{\alpha}<N$ even. We will prove
that $\Psi_\alpha$ holds. Suppose 
\[A\models \bigwedge_{\gamma\neq\varepsilon} L_{\alpha\gamma}(\vc,a_1,b_1,\dots,a_{n},b_{n}) =
R_{\alpha\gamma}(\vc,a_1,b_1,\dots,a_{n},b_{n}).\]
or, equivalently,
\begin{equation}\label{eq:10}
A\models \bigwedge_{\gamma\neq\varepsilon} L_{\alpha\gamma}(\rho^*(\vc,a_1,b_1,\dots,a_{n},b_{n})) =
R_{\alpha\gamma}(\rho^*(\vc,a_1,b_1,\dots,a_{n},b_{n})).
\end{equation}
We then have:
\begin{align*}
L_\alpha(\vc,a_1,b_1,\dots) &=
L_\alpha(\rho^*(\vc,a_1,b_1,\dots)) 
&& \text{by equation~(\ref{eq:9})}\\
&= L_{\alpha (k+1)}(\rho^*(\vc,a_1,b_1,\dots)) 
&& \text{by identities~(\ref{eq:dfc_par_phi*})} \\
& = R_{\alpha (k+1)}(\rho^*(\vc,a_1,b_1,\dots)) 
&& \text{by~(\ref{eq:10})} \\
& = \ \cdots && \text{using~(\ref{eq:dfc_par_phi*}),
  (\ref{eq:10})} \\
& = \ \cdots && \text{and iterating\dots}\\
& = R_{\alpha N}(\rho^*(\vc,a_1,b_1,\dots)) \\
& = R_{\alpha}(\rho^*(\vc,a_1,b_1,\dots)) && \text{using
identities~(\ref{eq:dfc_par_phi*})}\\
& = R_{\alpha}(\vc,a_1,b_1,\dots) &&  \text{by equation~(\ref{eq:9}).} 
\end{align*} 
Hence we have
\[A \models   L_\alpha(\vc,a_1,b_1,\dots) =
R_\alpha(\vc,a_1,b_1,\dots),\] 
and we have proved the Lemma.
\end{proof}

\begin{lemma}\label{l:phi_2}
Let $\V$ be a variety with  BFC. Define:
\begin{equation}\label{eq:deb_kernel_dfc}
\Phi_2(x,y,z,w)\ :=\ \exists x_1 \forall y_1 \dots \exists x_n \forall y_n  \bigwedge_{m = 1}^{k}
\Psi_{2m-1}  
\end{equation}
Then $\V\models \Phi_2(x,y,x,y)$ and $\V\models \Phi_2(x,x,z,w)$. 
\end{lemma}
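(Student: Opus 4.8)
The plan is to verify the two identities $\V\models \Phi_2(x,y,x,y)$ and $\V\models \Phi_2(x,x,z,w)$ separately, in each case exhibiting explicit witnesses for the existentially quantified variables $x_1,\dots,x_n$ and then checking that each conjunct $\Psi_{2m-1}$ holds for arbitrary values of the universally quantified $y_1,\dots,y_n$. This mirrors the proof of Lemma~\ref{l:deb_q_centro}, but now the relevant recursion comes from the $s_i$'s rather than the $t_i$'s, and the governing object is $\sigma$ (or $\rho$) rather than $\rho^*$.

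For the first identity, substitute $z:=x$, $w:=y$ into $\Phi_2$. Given $a,b\in A\in\V$ and the substitution $(a,b,a,b)$, I would choose $x_1:=s_1(a,b,a,b)$ and, having chosen $x_i$ and being handed $y_i$, set $x_{i+1}:=s_{i+1}(a,b,a,b,x_1,y_1,\dots,x_i,y_i)$. Inspecting the definition of $\sigma$ (with the arguments $(a,b,c,d)=(a,b,a,b)$, so that $x=a$, $y=b$, $z=a$, $w=b$, and $x_j=s_j(\dots)$, $y_j=b_j$) we see that this makes $(a,b,a,b,x_1,y_1,\dots,x_n,y_n)=\sigma(a,b,a,b,x_1,y_1,\dots,x_n,y_n)$ as tuples in $A$, because $\sigma$ fixes the first four coordinates to $a,b,a,b$ exactly when $c=a$ and $d=b$, and it computes the odd-indexed coordinates by the $s_j$-recursion while leaving the $y_j$ alone. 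Then for any $\alpha$ of odd length, identities~(\ref{eq:2}) (for $0<\largo{\alpha}<N$) and, at the top level, the $\largo{\alpha}=N$ identities~(\ref{eq:dfc_N}), together with the base-case identities~(\ref{eq:1}), (\ref{eq:dfc_0_phi_1})--(\ref{eq:dfc_0_phi_N}), allow one to telescope $L_\alpha$ to $R_\alpha$ along the $\sigma$-chain exactly as in Lemma~\ref{l:deb_q_centro}: since evaluating at our tuple equals evaluating at its image under $\sigma$, the $\id$'s of the Mal'cev condition become genuine equalities, and an induction on $N-\largo{\alpha}$ gives $L_\alpha=R_\alpha$ for every nonempty $\alpha$, hence $\Psi_{2m-1}$ holds for each $m$.

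For the second identity, substitute $y:=x$ into $\Phi_2$, i.e. work with the tuple $(a,a,c,d)$. Here I would use the $\rho$-recursion: inspecting the definition of $\rho$ with $(a,b,c,d)=(a,a,c,d)$ gives $x=a$, $y=a$, $z=c$, $w=c$, $x_j=a_j$, and $y_j=s_j(\dots)$. But in $\Phi_2$ the $x_i$ are existential and the $y_i$ universal, which is the wrong pattern for $\rho$ directly; instead I note that the clauses of the Mal'cev condition used for odd $\alpha$ are identities~(\ref{eq:2}) and~(\ref{eq:3}), phrased via $\sigma$ and $\sigma^*$, and that under $y:=x$ one has in $\sigma^*$ the values $x=a$, $y=a$, $z=c$, $w=d$ — so $\sigma^*(a,a,c,d,\dots)$ again only constrains the odd coordinates. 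I would therefore again choose $x_{i+1}:=t_{i+1}(\vec X)$ along the recursion defining $\sigma^*$ (reading off its clause $x_j:=t_j(\dots)$), making the evaluation tuple a fixed point of $\sigma^*$; then identities~(\ref{eq:3}) telescope $L_\alpha$ to $R_\alpha$ for odd $\alpha<N$, and~(\ref{eq:dfc_N}) closes the top level. The base-case identities~(\ref{eq:1}) and~(\ref{eq:dfc_0_phi_1})--(\ref{eq:dfc_0_phi_N}) are the ones governing $\alpha=\varepsilon$, but $\varepsilon$ does not appear among the conjuncts of $\Phi_2$ (only $\Psi_{2m-1}$ with $2m-1\ge 1$), so I do not need the $x=L_\varepsilon$, $y=R_\varepsilon$ relations here — which is precisely why $\Phi_2$, unlike $\Phi_1$, can afford to be symmetric under $x\leftrightarrow y$ in this degenerate case.

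The main obstacle will be getting the quantifier alternation to line up with the right recursion in each of the two cases: one must pick, for the $\largo{\alpha}$-odd blocks, whether $\sigma$ or $\sigma^*$ is the relevant normalizing map, and then verify that the chosen witnesses $x_i$ (depending on the earlier $y_j$'s, as the quantifier prefix demands) really do make the evaluation point a fixed point of that map, so that the symbolic identities~(\ref{eq:2})--(\ref{eq:3}) and~(\ref{eq:dfc_N}) specialize to equalities in $A$. Once that alignment is pinned down, the telescoping induction is routine and entirely parallel to the argument already written out in Lemma~\ref{l:deb_q_centro}; I would present the first identity in full and remark that the second is obtained \emph{mutatis mutandis}, using $\sigma^*$ and identities~(\ref{eq:3}) in place of $\sigma$ and~(\ref{eq:2}).
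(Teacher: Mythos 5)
Your choice of witnesses is exactly right and coincides with the paper's: for $\Phi_2(x,y,x,y)$ take $x_{i+1}:=s_{i+1}(a,b,a,b,x_1,y_1,\dots,x_i,y_i)$ so that the evaluation tuple is a fixed point of $\sigma$, and for $\Phi_2(x,x,z,w)$ take $x_{i+1}:=t_{i+1}(\dots)$ so that it is a fixed point of $\sigma^*$; telescoping along identities~(\ref{eq:2}) (resp.~(\ref{eq:3})) is also the paper's argument. The gap is in what you claim the telescoping delivers. You assert that ``an induction on $N-\largo{\alpha}$ gives $L_\alpha=R_\alpha$ for every nonempty $\alpha$,'' with~(\ref{eq:dfc_N}) closing the top level. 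That unconditional statement is false, and the induction cannot be run: for even $\largo{\alpha}$ (including $\largo{\alpha}=N$) the only available identities are~(\ref{eq:dfc_N}) and~(\ref{eq:dfc_par_phi_1})--(\ref{eq:dfc_par_phi*}), which are identities evaluated at $\rho(\vec X)$ and $\rho^*(\vec X)$; your tuple is a fixed point of $\sigma$ (resp.\ $\sigma^*$), not of $\rho$ or $\rho^*$, so these identities do not specialize to equalities at your point, and both the base case and the even-level inductive steps fail. Concretely, in the example of Section~\ref{sec:examples} one has $L_1=x\cdot y_1$ and $R_1=y\cdot y_1$; at $(x,y,z,w)=(a,b,a,b)$ with $y_1:=b$ these evaluate to $a\cdot b=a$ and $b\cdot b=b$ in the four-element algebra displayed there, so $L_1=R_1$ fails unconditionally even though $\V\models\Phi_2(x,y,x,y)$.

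What is actually needed --- and what the paper proves --- is only the \emph{implication} $\Psi_{2m-1}$: fix a nonempty $\alpha$ of odd length, \emph{assume} the antecedent $\bigwedge_{\gamma\neq\varepsilon}L_{\alpha\gamma}(\vec a)=R_{\alpha\gamma}(\vec a)$ (which in particular supplies the bridging equalities $L_{\alpha j}(\vec a)=R_{\alpha j}(\vec a)$ for the one-letter extensions of $\alpha$), and then the chain
\[ L_\alpha(\vec a)=L_{\alpha 1}(\vec a)=R_{\alpha 1}(\vec a)=L_{\alpha 2}(\vec a)=\cdots=R_{\alpha k}(\vec a)=R_\alpha(\vec a) \]
closes using only~(\ref{eq:2}) (resp.~(\ref{eq:3}), running through the children $\alpha(k+1),\dots,\alpha N$) together with the fixed-point property. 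Each conjunct $\Psi_{2m-1}$ is handled independently; no induction over $\largo{\alpha}$, no appeal to~(\ref{eq:dfc_N}), and no $\varepsilon$-identities are involved. With this correction your argument becomes the paper's proof.
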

\begin{proof}
We only prove the first one, since the proofs are analogous to that of
the previous lemma.
Suppose $a,b\in A\in\V$. Take $a_1 := s_1(a,b,a,b)$. Assuming $a_i$ is
already chosen and $b_{i}$ is given, define 
\[a_{i+1}:= s_{i+1}(a,b,a,b,a_1,b_1,\dots,a_{i},b_{i}).\]
The construction of $b_i$'s  ensures 
\begin{equation}\label{eq:igual_sigma}
(a,b,a,b,a_1,b_1,\dots,a_{n},b_{n}) =\sigma(a,b,a,b,a_1,b_1,\dots,a_{n},b_{n}).
\end{equation}

Take nonempty $\alpha$ with  $\largo{\alpha}<N$ odd. We will prove
that $\Psi_\alpha$ holds. Suppose 
\[A\models \bigwedge_{\gamma\neq\varepsilon} L_{\alpha\gamma}(a,b,a,b,a_1,b_1,\dots,a_{n},b_{n}) =
R_{\alpha\gamma}(a,b,a,b,a_1,b_1,\dots,a_{n},b_{n}).\]
or, equivalently,
\begin{equation}\label{eq:diez}
A\models \bigwedge_{\gamma\neq\varepsilon} L_{\alpha\gamma}(\sigma(a,b,a,b,a_1,b_1,\dots,a_{n},b_{n})) =
R_{\alpha\gamma}(\sigma(a,b,a,b,a_1,b_1,\dots,a_{n},b_{n})).
\end{equation}
We then have:
\begin{align*}
L_\alpha(a,b,a,b,a_1,b_1,\dots) &=
L_\alpha(\sigma(a,b,a,b,a_1,b_1,\dots)) 
&& \text{by equation~(\ref{eq:igual_sigma})}\\
&= L_{\alpha 1}(\sigma(a,b,a,b,a_1,b_1,\dots)) 
&& \text{by identities~(\ref{eq:2})} \\
& = R_{\alpha 1}(\sigma(a,b,a,b,a_1,b_1,\dots)) 
&& \text{by~(\ref{eq:diez})} \\
& = \ \cdots && \text{using~(\ref{eq:2}),
  (\ref{eq:10})} \\
& = \ \cdots && \text{and iterating\dots}\\
& = R_{\alpha k}(\sigma(a,b,a,b,a_1,b_1,\dots)) \\
& = R_{\alpha}(\sigma(a,b,a,b,a_1,b_1,\dots)) && \text{using
identities~(\ref{eq:2})}\\
& = R_{\alpha}(a,b,a,b,a_1,b_1,\dots) &&  \text{by equation~(\ref{eq:igual_sigma}).} 
\end{align*} 
Hence we have
\[A \models   L_\alpha(a,b,a,b,a_1,b_1,\dots,a_{n},b_{n}) =
R_\alpha(a,b,a,b,a_1,b_1,\dots,a_{n},b_{n}).\] 
The proof that $\V\models \Phi_2(x,x,z,w)$ is similar, but using
$\sigma^*$ and $t_i$'s in place of $\sigma$ and $s_i$'s, respectively.
\end{proof}
\begin{lemma}
Let $a,b,c\in A\in\V$ with BFC. If $A$ satisfies $\Phi_2(a,b,c,c)$, then $a=b$.
\end{lemma}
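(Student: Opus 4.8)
The plan is to mimic the structure of the final part of the proof of Theorem~\ref{th:malcev_bfc} and of Corollary~\ref{co:Willard_terms}, specializing everything to the setting where the hypothesis $\Phi_2(a,b,c,c)$ supplies the witnesses for the odd-length $\Psi_{2m-1}$ and the universally-quantified conditions force the right congruential relations to hold. First I would unpack $\Phi_2(a,b,c,c)$: it asserts $\exists x_1 \forall y_1 \dots \exists x_n \forall y_n \bigwedge_{m=1}^{k}\Psi_{2m-1}$ evaluated at $(x,y,z,w)=(a,b,c,c)$. The Skolem functions given by this formula produce, for any choice of the $y_i$, elements $a_i$ (the existential witnesses) so that all the odd-level identities $\Psi_{2m-1}$ hold. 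The key observation is that setting $w=z=c$ makes the two tuples relevant to the even-level identities collapse in the way needed: with $z=w=c$ one gets $\Cg(z,w)=\Delta$, so $\rho$ applied to the relevant tuple behaves trivially on the fourth coordinate, and more importantly the even-length identities~(\ref{eq:dfc_par_phi_1})--(\ref{eq:dfc_par_phi*}) and the base identities~(\ref{eq:1})--(\ref{eq:dfc_0_phi_N}) become usable to collapse the whole $L_\varepsilon$--$R_\varepsilon$ chain.

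The central step is to run the same downward induction on word length $\largo{\alpha}$ as in Corollary~\ref{co:Willard_terms}, but now over the \emph{free} algebra $F(\vec X)$ (or directly in $A$ with the $a_i$ as above) with the congruences chosen so that $\phi=\phis=\th=\ths=\Delta$ can eventually be forced. Concretely I would pick $b_i$ so that $\rho^*$ acts as the identity on the tuple, exactly as in Lemma~\ref{l:deb_q_centro}, equation~(\ref{eq:9}); this makes the length-$N$ identities~(\ref{eq:dfc_N}) give $L_\beta=R_\beta$ outright, and makes all even-length $\Psi$'s trivially true by the argument of Lemma~\ref{l:deb_q_centro}. Simultaneously, the hypothesis $\Phi_2(a,b,c,c)$ gives the odd-length $\Psi$'s. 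Now the recursion of identities~(\ref{eq:1})--(\ref{eq:dfc_0_phi_N}) collapses the $\varepsilon$-chain: $x = L_\varepsilon \stackrel{\text{\eqref{eq:dfc_0_phi_1}}}{=}L_1$, and inductively $L_j=R_j$ (from the $\Psi$'s, peeling off one word-letter at a time), $R_j \stackrel{\text{\eqref{eq:dfc_0_phi}}}{=} L_{j+1}$, ending with $R_N \stackrel{\text{\eqref{eq:dfc_0_phi_N}}}{=} R_\varepsilon = y$. The point is that since both $\sigma^*$-type and $\sigma$-type reductions degenerate to the identity once we have arranged the tuple to be a fixed point of $\rho^*$ and (using $z=w=c$) the relevant $\sigma$, all the congruences in play are $\Delta$, so every ``$\stackrel{\th}{\equiv}$'' or ``$\stackrel{\phi}{\equiv}$'' step in the chain is literally equality.

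More carefully, I would set it up as: choose the $a_i$ via the Skolem functions of $\Phi_2(a,b,c,c)$ and the $b_i$ by $b_{i+1}:=t_{i+1}(a,b,c,c,a_1,b_1,\dots,a_i,b_i)$ so that the full tuple $\vec e=(a,b,c,c,a_1,b_1,\dots,a_n,b_n)$ satisfies $\rho^*(\vec e)=\vec e$; check that with $z=w=c$ one also has enough to make the even-length and base-level collapses go through (the fourth-coordinate argument). Then verify, by induction on $\largo{\alpha}$ from $N$ down to $0$, that $L_\alpha(\vec e)=R_\alpha(\vec e)$ for all $\alpha$, using~(\ref{eq:dfc_N}) at the base, the even-length identities together with the fixed-point property at even levels, and the hypothesis-supplied $\Psi_{2m-1}$ at odd levels. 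At $\alpha=\varepsilon$ this yields $a = L_\varepsilon(\vec e) = R_\varepsilon(\vec e) = b$ via~(\ref{eq:1}).

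The main obstacle I anticipate is bookkeeping: matching exactly which of the many identity blocks~(\ref{eq:dfc_N})--(\ref{eq:3}) is invoked at each parity and each word length, and confirming that substituting $z=w=c$ (equivalently, working modulo $\Cg(z,w)$) really does trivialize all the $\sigma,\sigma^*$ reductions that appear in the even-length and $\varepsilon$-level steps — this is the analogue of the ``$\rho^*(\vec e)=\vec e$'' trick but on the $\sigma$ side, and it is where the specific choice $(x,y,z,w)=(a,b,c,c)$ is essential. Once that is checked, the induction is entirely parallel to the $(\Leftarrow)$ direction of Corollary~\ref{co:Willard_terms} and Lemma~\ref{l:deb_q_centro}, and I would simply say that the remaining details follow those arguments \emph{mutatis mutandis}.
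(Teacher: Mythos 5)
Your overall architecture is the right one — Skolem witnesses for the $a_i$, an explicit choice of the $b_i$ making the tuple a fixed point of a $\rho$-type substitution, even-length $\Psi$'s from that fixed-point property, odd-length $\Psi$'s from the hypothesis $\Phi_2(a,b,c,c)$, and then a collapse of the $\varepsilon$-level chain. But there is a concrete error in the choice of the $b_i$'s that breaks the final step. You set $b_{i+1}:=t_{i+1}(a,b,c,c,a_1,b_1,\dots,a_i,b_i)$, so that $\rho^*(\vec e)=\vec e$, copying Lemma~\ref{l:deb_q_centro} verbatim. That choice does let you derive the even-length $\Psi_{2m}$'s (via the $\termphis$-identities~(\ref{eq:dfc_par_phi*})), but it is useless for the $\varepsilon$-level collapse: the base identities~(\ref{eq:dfc_0_phi_1}), (\ref{eq:dfc_0_phi}) and~(\ref{eq:dfc_0_phi_N}) are identities of $\V$ only after substituting $\termphi=\rho(\vec X)$ — there is no $\rho^*$-version of the $\largo{\alpha}=0$ block in Theorem~\ref{th:malcev_bfc}. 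So from them you only get $L_\varepsilon(\rho(\vec e))=L_1(\rho(\vec e))$, $R_j(\rho(\vec e))=L_{j+1}(\rho(\vec e))$, etc., while your fixed-point property and your derived equalities $L_j(\vec e)=R_j(\vec e)$ live at $\vec e$ and at $\rho^*(\vec e)=\vec e$, not at $\rho(\vec e)$. With your choice $\rho(\vec e)\neq\vec e$ in general (its $y_j$-coordinates are $s_j$-values, yours are $t_j$-values), so the chain $a=L_\varepsilon\to L_1\to R_1\to\cdots\to R_N\to R_\varepsilon=b$ does not close.

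The fix is exactly the paper's: define $b_{i+1}:=s_{i+1}(a,b,c,c,a_1,b_1,\dots,a_i,b_i)$, so that $\vec e=(a,b,c,c,a_1,b_1,\dots,a_n,b_n)$ is a fixed point of $\rho$ (here the hypothesis $w=z=c$ is what fixes the fourth coordinate, since $\rho$ overwrites $w$ with the value of $z$). Then the even-length $\Psi_{2m}$'s follow by the argument of Lemma~\ref{l:deb_q_centro} with $s_i$, $\rho$ and the $\termphi$-identities~(\ref{eq:dfc_par_phi_1})--(\ref{eq:dfc_par_phi_k}) in place of $t_i$, $\rho^*$ and~(\ref{eq:dfc_par_phi*}), and the $\varepsilon$-level identities can be applied directly at $\vec e$ to yield $a=b$. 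Relatedly, your closing paragraph speaks of ``trivializing the $\sigma,\sigma^*$ reductions'' at the even and $\varepsilon$ levels; those levels involve $\rho,\rho^*$ (i.e.\ $\termphi,\termphis$), not $\sigma,\sigma^*$, and the role of $z=w=c$ is not to degenerate $\sigma$ but to make $\vec e$ a $\rho$-fixed point. Once the $b_i$'s are redefined via the $s_i$'s, the rest of your induction goes through as you describe.
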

\begin{proof}
 Assume $A\models \Phi_2(a,b,c,c)$. Take $b_1:=s_1(a,b,c,c)$. Let $a_1$
be given by the outermost existential quantifier of $\Phi_2$.

Assuming $b_{i}$ is
already chosen and $a_i$ is the corresponding witness for $\Phi_2$, let
\begin{equation}\label{eq:22}
b_{i+1}:= s_{i+1}(a,b,c,c,a_1,b_1,\dots,a_{i},b_{i})
\end{equation}
This selection satisfies
\begin{equation}\label{eq:11}
(a,b,c,c,a_1,b_1,\dots,a_{n},b_{n}) =\rho(a,b,c,d,a_1,b_1,\dots,a_{n},b_{n}).
\end{equation}
Using an analogous reasoning to that in the  proof of
Lemma~\ref{l:deb_q_centro} (replacing there $t_i$'s and $\rho^*$ by
$s_i$'s and $\rho$, respectively), the reader may check that this
choice of $a_i$, $b_i$  satisfies the matrix of $\Phi_1(a,b,c,c)$. We
hence obtain
\begin{equation*}
A\models \Bigl(\bigwedge_{m=1}^N \Psi_m\Bigr) (a,b,c,c,a_1,b_1,\dots,a_n,b_n)
\end{equation*}
From an easy inspection of the form of $\Psi_m$, it can be deduced
that
\[A\models \bigwedge_{j=1}^{N} L_j(a,b,c,c,a_1,b_1,\dots,a_{n},b_{n}) =
R_j(a,b,c,c,a_1,b_1,\dots,a_{n},b_{n}),\]
and using~(\ref{eq:11}),
\begin{equation}\label{eq:long_1}
A\models \bigwedge_{j=1}^{N} L_j(\rho(\vc,a_1,b_1,\dots,a_{n},b_{n})) = R_j(\rho(\vc,a_1,b_1,\dots,a_{n},b_{n})).
\end{equation}
Therefore,
\begin{align*}
a & =L_\varepsilon(a,b,c,c,a_1,b_1,\dots,a_{n},b_{n}) && \text{by
  identities~(\ref{eq:1})} \\
&= L_\varepsilon(\rho(\vc,a_1,b_1,\dots,a_{n},b_{n})) && \text{by
  equation~(\ref{eq:11})}\\ 
& = L_1(\rho(\vc,a_1,b_1,\dots,a_{n},b_{n})) &&
\text{by identities~(\ref{eq:dfc_0_phi_1}), with $\alpha=\varepsilon$} \\
&=  R_1(\rho(\vc,a_1,b_1,\dots,a_{n},b_{n})) && \text{by
  equations~(\ref{eq:long_1})}\\  
&= L_2(\rho(\vc,a_1,b_1,\dots,a_{n},b_{n})) && \text{by identities~(\ref{eq:dfc_par_phi})} \\
&= \ \cdots && \text{using
  equations~(\ref{eq:dfc_par_phi}), (\ref{eq:long_1})} \\
&= \ \cdots && \text{and iterating\dots}\\
& = R_N(\rho(\vc,a_1,b_1,\dots,a_{n},b_{n})) && \text{using
equations~(\ref{eq:dfc_par_phi_k}) once more:}\\
& = R_{\varepsilon}(\rho(\vc,a_1,b_1,\dots,a_{n},b_{n})) \\
&= R_\varepsilon(a,b,c,c,a_1,b_1,\dots,a_{n},b_{n}) && \text{by
  equation~(\ref{eq:11})}\\ 
& = b && \text{by identities~(\ref{eq:1})}
\end{align*} 
Hence $a = b$.
\end{proof}
\begin{proof}[Proof of Theorem~\ref{th:principal}]
$(\tne)$ The formula $\pi(x,y,z,w) :=
  \Phi_1(x,y,z,w)\y\Phi_2(x,y,z,w)$ satisfies (a), (b) and (c) in
  Theorem~\ref{th:principal}(1) by the previous lemmas. It is also 
 preserved by taking direct factors and direct products: this is an
 immediate application of \cite[Theorem 22]{DFC}, where we 
take $\vec z = (z,w)$ and $\tau_\alpha(\vec X)$ to be
``$L_\alpha(\vec X)=R_\alpha(\vec X)$''.

$(\ent)$ This is easy to show; for details see \cite[Theorem 1.5]{7}.
\end{proof}

\section{Some (Counter)examples}\label{sec:examples}
One of our main interests was to find an algebraic counterpart of the
formula $\pi$ witnessing  property (*). The first approach is the
characterization in Corollary~\ref{p:def_infinitaria}.  
A second one is given by the following 
 semantic consequence of $\pi$: every time one has $A\models
\pi(a,b,c,d)$, one obtains
\begin{equation}\label{eq:18}
 \text{for every $\th\in FC(A)$, $(c,d) \in \th$ implies $(a,b) \in \th$.}
\end{equation}
where $FC(A)$ is the set of factor congruences of $A$. This can be immediately seen by noting that for all $\th\in FC(A)$ we have  $A/\th\models
\pi(a/\th,b/\th,c/\th,d/\th)$ since $\pi$ is preserved by direct factors, and if $c/\th=d/\th$ we must have $a/\th=b/\th$. 

Now call $\Gamma(a,b,c,d)$ the assertion~(\ref{eq:18}). In spite this
predicate might not be expressible in first-order logic, it can be  proved
that it satisfies all conditions for property (*):

\begin{prop}\label{p:Psi}
For all, $a,b,c,d\in A\in\V$, where $\V$ has BFC, we have:
\begin{enumerate}
\item $\Gamma(a,b,c,d)$ is equivalent to ``$(a,b) \in \bigcap \{\th\in
  FC(A): (c,d) \in \th\}$'', and hence  $\Gamma(a_i,b_i,c,d)$ for all $i=1,\dots,l$ implies
  $\Gamma(F(\vec a),F(\vec b),c,d)$, for every $l$-ary basic operation
  $F$ in the language of $\V$. 
\item $A\models \Gamma(a,a,b,c)$.
\item $A\models \Gamma(a,b,a,b)$.
\item $A\models \Gamma(a,b,c,c) \impl a=b$.
\item If $a',b',c',d'\in B\in\V$, $A\times B\models
  \Gamma(\<a,a'\>,\<b,b'\>,\<c,c'\>,\<d,d'\>)$ if and only if $A\models \Gamma(a,b,c,d)$ and $B \models \Gamma(a',b',c',d')$.
\end{enumerate}
\end{prop}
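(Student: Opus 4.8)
The plan is to verify the five items of Proposition~\ref{p:Psi} essentially by unwinding the definition of $\Gamma$ and using elementary facts about factor congruences, the whole point being that $\Gamma(a,b,c,d)$ is a (possibly infinitary) ``definability'' predicate whose behaviour mirrors that of the first-order formula $\pi$ already constructed. First I would prove item (1): the equivalence with ``$(a,b)\in\bigcap\{\th\in FC(A):(c,d)\in\th\}$'' is just the statement of~(\ref{eq:18}) rewritten, and from this description the closure under basic operations is immediate, since each $\th\in FC(A)$ is a congruence, so $(c,d)\in\th$ forces $(a_i,b_i)\in\th$ for all $i$, hence $(F(\vec a),F(\vec b))\in\th$; as $\th$ was arbitrary among the factor congruences containing $(c,d)$, we get $\Gamma(F(\vec a),F(\vec b),c,d)$.

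Items (2), (3) and (4) are then read off quickly. For (2), if $(b,c)\in\th\in FC(A)$ then trivially $(a,a)\in\th$, so $\Gamma(a,a,b,c)$ holds with no hypothesis on $\V$ at all. For (3), $\Gamma(a,b,a,b)$ asks that $(a,b)\in\th$ whenever $(a,b)\in\th$, which is a tautology. Item (4) is where BFC enters: $\Gamma(a,b,c,c)$ says $(a,b)$ lies in every factor congruence containing $(c,c)$; but $(c,c)$ lies in $\Delta^A$, and $\Delta^A$ is a factor congruence (the complement of $\nabla^A$), so $(a,b)\in\Delta^A$, i.e.\ $a=b$. (One does not even need BFC for this particular deduction, only that $\Delta$ is a factor congruence, which is always true; BFC is what makes $\Gamma$ behave like the \emph{first-order} $\pi$ elsewhere.)

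The substantive point is item (5), the preservation by direct products and factors. The forward direction is routine: if $A\times B\models\Gamma(\<a,a'\>,\<b,b'\>,\<c,c'\>,\<d,d'\>)$, apply the definition to the factor congruences $\ker\proj_A\circ\th'$ and $\ker\proj_B\circ\th$ obtained by ``lifting'' a given $\th\in FC(A)$ or $\th'\in FC(B)$ along the product projections (using that $\ker\proj_A,\ker\proj_B$ are complementary factor congruences and that factor congruences of $A\times B$ of this shape correspond to factor congruences of the components). The converse — that $\Gamma$ holds in $A$ and in $B$ implies it holds in $A\times B$ — is the part I expect to be the main obstacle, because an arbitrary factor congruence $\psi$ of $A\times B$ need not split as a product $\th\times\th'$; one must argue that, by BFC, $FC(A\times B)$ is still controlled by $FC(A)$ and $FC(B)$ in the sense that $\bigcap\{\psi\in FC(A\times B):(\<c,c'\>,\<d,d'\>)\in\psi\}$ computes componentwise to $\bigcap\{\th\in FC(A):(c,d)\in\th\}\times\bigcap\{\th'\in FC(B):(c',d')\in\th'\}$. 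The clean way to do this is to invoke that $\Gamma$ is the semantic shadow of $\pi$: since $\pi$ is first-order and preserved by direct products and factors (Theorem~\ref{th:principal}), and since~(\ref{eq:18}) shows $A\models\pi(a,b,c,d)\Rightarrow\Gamma(a,b,c,d)$, while Corollary~\ref{p:def_infinitaria} (or a direct argument with the Mal'cev terms) gives the reverse implication in each algebra of $\V$, the equivalence $\pi\equiv\Gamma$ on all of $\V$ transfers the product/factor preservation of $\pi$ verbatim to $\Gamma$. Thus the real work is the observation that $\Gamma$ and $\pi$ coincide as predicates on algebras in $\V$, after which (5) — and indeed a second proof of (2)–(4) — follows from the already-established properties of $\pi$.
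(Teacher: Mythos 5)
Your items (1)--(4) match the paper's treatment (the paper simply declares them ``obvious''), and your identification of the forward direction of (5) as routine is fine. The problem is your proposed route for the substantive direction of (5). You claim that $\Gamma$ and $\pi$ ``coincide as predicates on algebras in $\V$,'' citing~(\ref{eq:18}) for $\pi\Rightarrow\Gamma$ and Corollary~\ref{p:def_infinitaria} for the converse, and then you transfer the preservation properties of $\pi$ to $\Gamma$. The converse implication $\Gamma\Rightarrow\pi$ is false, and the paper's own Section~\ref{sec:examples} is devoted to exactly this point: in the semigroup example there, $A\models\pi(a,b,a,b)$ and $A\models\pi(c,c,a,b)$ but $A\not\models\pi(a\cdot c,b\cdot c,a,b)$, whereas $\Gamma$ \emph{is} closed under basic operations in its first two arguments by item (1); so $\pi$ cannot equal $\Gamma$ (nor any predicate of the form $\forall\th\in FC^*(A)\colon (z,w)\in\th\Rightarrow(x,y)\in\th$). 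Corollary~\ref{p:def_infinitaria} concerns the infinitary predicate $\Pi$ evaluated on quadruples of a special shape in a product, and gives you nothing like $\Gamma\Rightarrow\pi$ on arbitrary quadruples. So your proof of (5) does not go through as written.

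The missing ingredient is the one you explicitly set aside: you write that an arbitrary $\psi\in FC(A\times B)$ ``need not split as a product $\th\times\th'$,'' but under BFC it \emph{does} split. This is precisely what the paper uses: given $\th\in FC(A\times B)$ with $(\<c,c'\>,\<d,d'\>)\in\th$, BFC (equivalently, the strict refinement property of \cite{ChaJonTar}) yields $\th^0\in FC(A)$ and $\th^1\in FC(B)$ with $\th=\{(\<x,x'\>,\<y,y'\>): (x,y)\in\th^0,\ (x',y')\in\th^1\}$; then $(c,d)\in\th^0$, $(c',d')\in\th^1$, the hypotheses $\Gamma(a,b,c,d)$ and $\Gamma(a',b',c',d')$ give $(a,b)\in\th^0$ and $(a',b')\in\th^1$, and hence $(\<a,a'\>,\<b,b'\>)\in\th$. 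Replacing your equivalence-with-$\pi$ argument by this decomposition lemma repairs the proof and is the entire content of the paper's argument for (5).
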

\begin{proof}
The first four are obvious. To check $\Gamma$ is preserved by direct
products, suppose $A\models \Gamma(a,b,c,d)$ and $B \models
\Gamma(a',b',c',d')$. Now take $\th \in FC(A\times B)$ and assume
$(\<c,c'\>,\<d,d'\>)\in\th$. By BFC, 
there exist factor congruences  $\th^0 \in FC(A)$ and $\th^1 \in
FC(B)$ such that $\th = \{(\<x,x'\>,\<y,y'\>) : (x,x')\in \th^0,
(y,y')\in\th^1\}$. This yields 
$(c,d)\in\th^0$ and $(c',d')\in\th^1$, and then we have
$(a,b)\in\th^0$ and $(a',b')\in\th^1$ by hypothesis. Hence
$(\<a,a'\>,\<b,b'\>)\in\th$ and  we have showed that $A\times B\models
  \Gamma(\<a,a'\>,\<b,b'\>,\<c,c'\>,\<d,d'\>)$. Preservation of $\Gamma$
  by direct factors is similar.
\end{proof}
It turns out that if $\Gamma$ is a first-order formula, it is the
\emph{weakest} witness for (*).  In the case of finite languages, it
can be proved that if no 
nontrivial algebra of $\V$ has a trivial 
subalgebra, then  $\Gamma$ is a first-order formula. This
is an easy consequence of~\cite{DFC}.

If one replaces $FC(A)$ in the
definition of $\Gamma$ by some other set of congruences that contains $\Delta$,
Proposition~\ref{p:Psi} will still hold with the possible exception of (5).
One nice conjecture would be that one
may obtain some first-order formula by replacing $FC(A)$ in the
definition of $\Gamma$ by some bigger set of congruences. While
this is indeed the case for semilattices,  we
cannot expect to obtain in such manner every 
formula witnessing (*), even not one that results from our
construction, as the following counterexample
shows.

Take the variety $\V$ in the language $\{0,\cdot\}$ defined by the
following identities:
\begin{align*}
(x \cdot y) \cdot z &\id x \cdot (y \cdot z) \\
x \cdot x &\id x \\
x \cdot 0 &\id 0 \cdot x \id 0.
\end{align*}
We will calculate the terms $s_i, t_i$ and $L_\alpha,R_\alpha$. For
this particular case, $N=n=2$. Define:
\begin{align*}
s_1 &:= x &  s_2 &:= y \\
t_1 &:= 0 &  t_2 &:= 0 
\end{align*}
\begin{align*}
  L_1 &:= x \cdot y_1 & L_{11} &:= z \cdot y_1 & 
L_{12} &:= y \cdot y_1 
 &  R_1 &:= y \cdot y_1 \\
 &  & 
R_{11} &:= w \cdot  y_1
& R_{12} &:= y \cdot y_1 
\end{align*}
\begin{align*}
  L_2 &:= y_2 \cdot x & L_{21} &:= y_2 \cdot z & 
L_{22} &:= y_2 \cdot y 
&  R_2 &:= y_2 \cdot y \\
 &  & 
R_{21} &:= y_2 \cdot w 
& R_{22} &:=
y_2 \cdot y  
\end{align*}
Then the formula $\pi(x,y,z,w)$ obtained for these terms is the
conjunction of $\Phi_1$ and $\Phi_2$:
\begin{multline*}
\Phi_1  := \exists y_1 \forall x_1 \exists y_2 \forall x_2:  z \cdot
y_1  = w \cdot  y_1 
\ \y\  y \cdot y_1  = y \cdot y_1 \ \y\  \\ 
\ \y\   y_2
\cdot z =  y_2 \cdot w \ \y\   y_2 \cdot y = y_2 \cdot y.
\end{multline*}
\begin{multline*}
\Phi_2 := \exists x_1 \forall y_1 \exists x_2 \forall y_2: \bigl((z \cdot
y_1  = w \cdot  y_1 \ \y\  y \cdot y_1  = y \cdot y_1) \impl  x \cdot
y_1 = y \cdot y_1 \bigr) \ \y\  \\ 
\ \y\  \bigl(( y_2
\cdot z =  y_2 \cdot w \ \y\   y_2 \cdot y=y_2 \cdot y ) \impl  y_2 \cdot x
  = y_2 \cdot y\bigr).   
\end{multline*}
Formula $\Phi_1$ holds trivially in $\V$ (take $y_1,y_2=0$) and
$\Phi_2$ may be simplified to:
\[ \forall u (z \cdot u = w \cdot u \impl x \cdot u = y
\cdot u) \y (u \cdot z = u \cdot w \impl u \cdot x = u
\cdot y).\] 

\noindent\begin{minipage}{4.5in}
Now, the algebra $A$ given by the table on the right is in $\V$. We
have $A\models\pi(a,b,a,b)$ and $A\models\pi(c,c,a,b)$. If $\pi(x,y,z,w)$ were of
the form
\[\forall \th \in FC^*(A): (z,w)\in \th \ent (x,y)\in \th,\] 
\end{minipage}
\begin{minipage}{1.7in}
\begin{center}
      \begin{tabular}{|c|cccc|}
	\hline $\cdot^A$ & 0 & $a$ & $b$ & $c$ \\\hline 
	0 & 0 & 0 & 0  & 0 \\
	$a$ & 0 & $a$ & $a$  & $a$ \\
	$b$ & 0 & $a$ & $b$  & $c$ \\
	$c$ & 0 & $c$ & $c$  & $c$ \\\hline
   \end{tabular} 
\end{center}
\end{minipage}
\smallskip\\
for some set of congruences $FC^*(A)$,  we should also have $A\models\pi(a\cdot
c,b \cdot c,a,b)$  by Proposition~\ref{p:Psi}~(1). But that's not the case since $ a \cdot a = b
\cdot a$ and  $(a\cdot c) \cdot a \neq (b\cdot c) \cdot a$.
\medskip

For the case of semilattices, the terms $L_\alpha$,  $R_\alpha$ and
$s_i$ are the same and we have to take $t_1 = t_2 := z \cdot w$. We obtain the
simpler formula $\pi_s(x,y,z,w)$:
\[ \forall u (z \cdot u = w \cdot u \impl x \cdot u = y
\cdot u),\]
which is equivalent to $\forall \th \in FC^*(A): (z,w)\in \th \ent
(x,y)\in \th$ for every semilattice $A$, where we take 
\[FC^*(A) :=
\{\th_{z,w} \in \CON(A): z,w\in A\} \text{ and } \th_{z,w} := \{(x,y) \in A^2
: \pi_s(x,y,z,w)\}.\]
\section{Acknowledgements}
I would like to thank specially Diego Vaggione, for his constant
support and for the sharpest observations. I would also like to thank
Ross Willard for his generous contribution to this paper. Finally,
Teresita Terraf was very helpful with the details of presentation.

\bigskip
\bigskip

\begin{quote}
CIEM --- Facultad de Matem\'atica, Astronom\'{\i}a y F\'{\i}sica 
(Fa.M.A.F.) 

Universidad Nacional de C\'ordoba - Ciudad Universitaria

C\'ordoba 5000. Argentina.

email: \texttt{sterraf@mate.uncor.edu}
\end{quote}
\end{document}